\newtheorem{theorem}{Theorem}[section]
\newtheorem{corollary}{Corollary}
\newtheorem{proposition}{Proposition}
\theoremstyle{definition}
\newtheorem{remark}{Remark}
\newtheorem{example}{Example}
\newcommand{\F}{\mathbb{F}}
\newcommand{\bpf}{\begin{proof}}
\newcommand{\epf}{\end{proof}}
\newcommand{\rmv}[1]{}
\title[LRCs from fiber products of curves] 
      {Locally recoverable codes with availability $t\geq 2$ from fiber products of curves}
\author[Kathryn Haymaker, Beth Malmskog, and Gretchen Matthews]{}
\subjclass{Primary: 14G50; 94B27 Secondary: 11T71.}
 \keywords{Fiber product, GK curves, local recovery, Suzuki curves, availability.}
 \email{kathryn.haymaker@villanova.edu}
 \email{beth.malmskog@gmail.com}
 \email{gmatthe@clemson.edu}
\thanks{The second author is supported by NSA grant H98230-16-1-0300}
\thanks{$^*$ Corresponding author: Beth Malmskog}
\begin{document}
\maketitle

\centerline{\scshape Kathryn Haymaker}
\medskip
{\footnotesize
 \centerline{Department of Mathematics and Statistics}
   \centerline{Villanova University}
   \centerline{Villanova, PA  19085, USA}
   }
\medskip

\centerline{\scshape Beth Malmskog$^*$}
\medskip
{\footnotesize
 \centerline{Department of Mathematics and Computer Science}
   \centerline{Colorado College}
   \centerline{Colorado Springs, CO 80903 USA}
} 

\medskip

\centerline{\scshape Gretchen L. Matthews}
\medskip
{\footnotesize
 \centerline{ Department of Mathematical Sciences}
   \centerline{Clemson University }
   \centerline{Clemson, SC 29634, USA}
}

\bigskip

\begin{abstract}
We generalize the construction of locally recoverable codes on algebraic curves given by Barg, Tamo and Vl\u{a}du\c{t} \cite{Barg16} to those with arbitrarily many recovery sets by exploiting the structure of fiber products of curves.  Employing maximal curves, we create several new families of locally recoverable codes with multiple recovery sets, including codes with two recovery sets from the generalized Giulietti and Korchm\'{a}ros (GK) curves and the Suzuki curves, and new locally recoverable codes with many recovery sets based on the Hermitian curve, using a fiber product construction of van der Geer and van der Vlugt. In addition, we consider the relationship between local error recovery and global error correction as well as the availability required to locally recover any pattern of a fixed number of erasures. 
\end{abstract}

%

\section{Introduction} \label{Introduction}

Codes for local recovery were introduced in the context of distributed storage systems, where there is a need to repair 
a single erasure or small number of erasures by accessing a few coordinates of the received word, rather than accessing the entire received word. A linear code is said to have locality $r$ if for each coordinate $i$ of a codeword, there is a set of $r$ helper coordinates, called a recovery or helper set, so that in any codeword, the symbol in position $i$ can be recovered from the symbols in the helper set.

If an element of a recovery set becomes unavailable, local recovery may not be possible. This leads to what is known as the availability problem and the need for multiple recovery sets. A code is said to have availability $t$ if each coordinate has $t$ disjoint recovery sets.  In \cite{Barg16}, the authors construct locally recoverable codes with availability $t=2$ based on fiber products of curves and propose a group-theoretic perspective on the construction, whereby a curve can  sometimes be expressed as a fiber product of its quotient curves by certain subgroups of the automorphism group of the curve; see \cite{Barg16extended} for an extended version. In particular, given a curve $\mathcal{X}$ with automorphism group $Aut(\mathcal{X})$ containing a semi-direct product of two subgroups, a locally recoverable code with availability $2$ can be formed by considering the fixed fields of the function field of $\mathcal{X}$ of the subgroups in the semi-direct product. It is remarked that both perspectives can be extended in a straightforward way to provide multiple recovery sets, meaning $t>2$. 

In Section \ref{construction_subsection}, we carry out the generalization of the fiber product approach for $t\geq 2$. This also generalizes the group-theoretic perspective; however, we note that when $t>2$, associativity of a particular semi-direct product is required. This issue does not arise in the case where $t=2$, and, hence, is not addressed explicitly in \cite{Barg16}. See Remark  \ref{group bad remark} for more discussion. 

A key difference in our work and that of \cite{Barg16} is that we employ a different method to bound the code parameters than in \cite{Barg16}.  This results in new, potentially sharper bounds on minimum distance.  The bounds in \cite{Barg16} require knowledge of the degree of a function $x:\mathcal{X}\rightarrow \mathbb{P}^1$ where $x$ generates a particular field extension. Practically, this function can be quite difficult to construct. Our bounds avoid the issue and do not require that this function is known.  A more thorough discussion of this issue appears in Remark \ref{differences remark}.

We also consider properties of multiple local recovery sets such as  the availability required to locally recover any pattern of a fixed number of erasures; this may be found in Section \ref{recovery_issues}. There, the relationship between local error recovery and global error correction is investigated. 

Sections \ref{GK_section}, \ref{Suzuki_section}, and \ref{LRCt_section} demonstrate the `top-down' (group-theoretic) and `bottom-up' (fiber product-centered) approaches discussed in Section \ref{construction_subsection}. In Section \ref{GK_section}, we employ generalized GK curves in the fiber product construction. While the construction from GK curves is quite explicit, the recovery sets are unbalanced, meaning there is a large difference between the cardinalities of the two recovery sets associated with a particular coordinate; this may adversely impact other code parameters and may be less than optimal for applications. We illustrate the automorphism group perspective in Section \ref{Suzuki_section}, constructing theoretical codes on the Suzuki curve from its automorphism group that provide balanced recovery sets.  However, this section highlights the difficulty of using the automorphism group construction to explicitly construct LRCs: even with knowledge of the quotient curves, constructing the necessary maps can be quite difficult.  Finally, in Section \ref{LRCt_section}, we obtain locally recoverable codes on the Hermitian curve $\mathcal{H}_{p^{t}}$ with availability $t$ for arbitrarily large $t \geq 2$, which are explicit in nature and balanced, using the full power of the LRC($t$) construction. This differs from the recent work \cite{IPAM} which uses elliptic curves to construct LRC($2$)s as well as surfaces and other curves to construct LRC($1$)s. 

The curves considered in this paper are maximal, meaning they have as many points as possible over a given finite field.  Maximal curves have played an important role in the construction of algebraic geometry codes, as they support the construction of long codes over relatively small fields, with minimum distance bounded below based on geometric arguments. Their utility in the construction of locally recoverable codes comes from this as well.  Though the constructions in this paper do not require the curves involved to be maximal, all examples in the paper are based on maximal curves.

Our findings are summarized in Section \ref{Conclusion}.

\section{Notation and background}\label{Notation}

 We use curves over finite fields and maps of curves to construct locally recoverable codes with multiple recovery sets.  The following notation and background will be used throughout.  See \cite{Stichtenoth} and \cite[Chapter 3]{Liu} for more background and proofs.

Let $p$ be a prime number, and let $q=p^s$ for some $s\in\mathbb{N}$. Let $\mathbb{F}_q$ be the field with $q$ elements. For any natural number $n$, let $[n]=\{ 1, \dots, n\}$.  A linear code $C$ of length $n$ over $\F_q$ is a locally recoverable code, or LRC, with locality $r$ if and only if for all $\mathbf{c}=(c_1, c_2,\dots, c_n) \in C$ and for all $i \in [n]$ there exists a set 
\[ A_i \subseteq [n] \setminus \{ i \}, \]  
$|A_i|=r$, such that $c_i=\phi_i(\mathbf{c}|_{A_i})$ for some function $\phi_i: \F_q^{r} \rightarrow \F_q$. The idea is that the codeword symbol $c_i$ can be recovered from the symbols indexed by elements of $A_i$ without access to the other coordinates of the received word. The helper set $A_i$ is called a recovery set for the $i$th position. 

 We say that a locally recoverable code $C \subseteq \F_q^n$  has availability $t$ with locality $(r_1, \dots, r_t)$ provided 
 for all $i \in [n]$ there exists $A_{i1}, \dots, A_{it} \subseteq [n] \setminus \{i \}$ with $|A_{ij}|=r_{j}$, $A_{ij} \cap A_{ik} = \emptyset$ for $j \neq k$,  and for all $\mathbf{c} \in C$,  $c_i=\phi_{ij}(\mathbf{c}|_{A_{ij}})$ for some function $\phi_{ij}: \F_q^{r_j} \rightarrow \F_q$. Such a code is called an LRC($t$), or a locally recoverable code with availability $t$, to emphasize that each coordinate has $t$ disjoint recovery sets. Let $B_{ij} = A_{ij} \cup \{i\}$ for each $i\in [n]$, $j\in [t]$. 
 
Let $\mathcal{X}$ be a smooth, projective, absolutely irreducible algebraic curve defined over $\F_q$, of genus $g_{\mathcal{X}}$.  Let $\F_q(\mathcal{X})$ denote the field of functions on $\mathcal{X}$ defined over $\F_q$.  For any $i\in\mathbb{N}$, let $\mathcal{X}(\F_{q^i})$ denote the set of $\F_{q^i}$-rational points of $\mathcal{X}$.  A divisor $D$ on $\mathcal{X}(\mathbb{F}_q)$ is a formal integer sum of points on the curve, \[D=\sum_{P\in \mathcal{X}(\mathbb{F}_q)}a_P P,\] where $a_P\in\mathbb{Z}$ for all $P$. Divisors on $\mathcal{X}(\mathbb{F}_q)$ form a finite abelian group under formal addition.  The degree of $D$ is defined as $\textrm{deg}(D)=\sum_{P\in \mathcal{X}(\mathbb{F}_q)}a_P$.  A divisor $D$ with $a_P\geq 0$ for all $P$ is said to be effective, denoted by $D\geq 0$.  The support of the divisor $D$ is \[\textrm{supp}(D)=\{P:a_P\neq 0\}.\] 

For $f\in \F_q(\mathcal{X})$, define the divisor of $f$ to be \[\textrm{div}(f)=\sum_{P\in \mathcal{X}(\mathbb{F}_q)}\textrm{ord}_{P}(f) P,\] where $\textrm{ord}_P(f)$ is the order of vanishing of $f$ at the point $P$ and is negative when $f$ has a pole at point $P$.  A divisor of a function is called a principal divisor.  All principal divisors have degree 0, a very useful fact in bounding the minimum distance of the codes in this paper.

Given $D$, a divisor on $\mathcal{X}(\mathbb{F}_q)$, define the Riemann-Roch space 
\[\mathcal{L}(D)=\{f\in \F_q(\mathcal{X}): \textrm{div}(f)+D\geq 0\}\cup \{0\}.\]  The set $\mathcal{L}(D)$ is an $\mathbb{F}_q$-vector space, with dimension denoted $\ell(D)$.  The Riemann-Roch Theorem states that \[\ell(D)\geq \textrm{deg}(D) +1-g_{\mathcal{X}},\] where equality holds if $\textrm{deg}(D) \geq 2g_{\mathcal{X}}-1$.  This fact is useful in bounding the dimension of the codes in this paper.

\begin{figure}[h]
\begin{center}
\includegraphics[scale=.2]{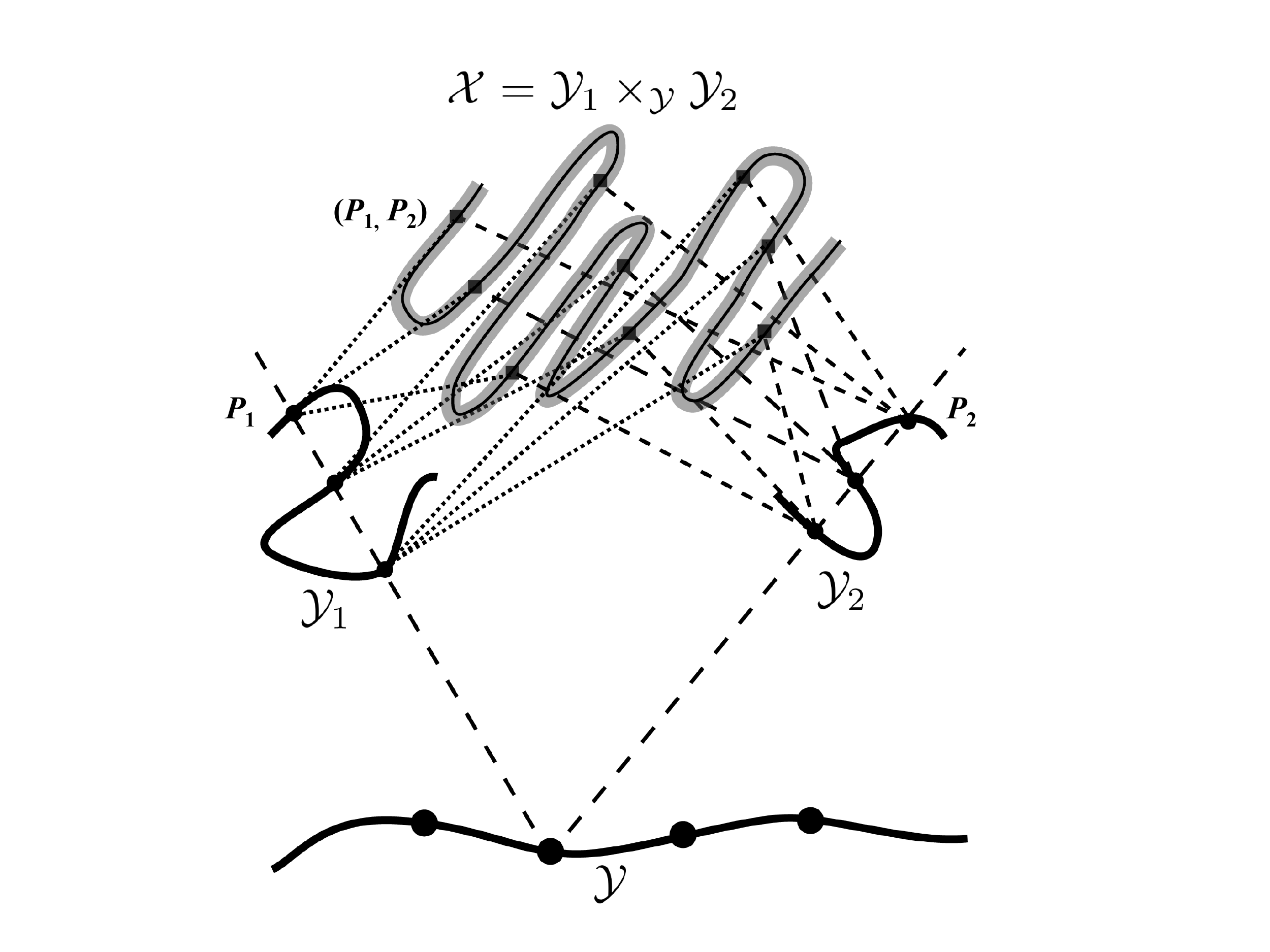}
\end{center}
\caption{A visualization of points on a fiber product of two curves. Points on the fiber product $\mathcal{X}$ may be thought of as tuples of points on the curves $\mathcal{Y}_1$ and $\mathcal{Y}_2$ which lie above the same point on $\mathcal{Y}$.}
\label{fp}
\end{figure}

The fiber product of curves is a geometric/combinatorial object that is at the center of the main code construction of this paper. Let $\mathcal{Y}, \mathcal{Y}_1, \mathcal{Y}_2, \ldots, \mathcal{Y}_t$ be smooth projective absolutely irreducible algebraic curves over $\mathbb{F}_q$ with  rational, separable maps $h_j: \mathcal{Y}_j \rightarrow \mathcal{Y}$.  The fiber product of $\mathcal{Y}_1, \mathcal{Y}_2, \ldots, \mathcal{Y}_t$ over $\mathcal{Y}$ is denoted by $\mathcal{Y}_1\times_{\mathcal{Y}}\mathcal{Y}_2\times_{\mathcal{Y}}\dots \times_{\mathcal{Y}}\mathcal{Y}_t$ and defined in the usual way as the universal object formed by the pullback of the maps $h_j$.  The fiber product of $t$ curves is also an algebraic curve.  Let $\mathcal{X}=\mathcal{Y}_1\times_{\mathcal{Y}}\mathcal{Y}_2\times_{\mathcal{Y}}\dots \times_{\mathcal{Y}}\mathcal{Y}_t$.  Concretely, the $\mathbb{F}_q$-rational points of the fiber product $\mathcal{X}$ are given by 
\[\mathcal{X}(\mathbb{F}_q)=\{(P_1,P_2,\dots, P_t):P_i\in\mathcal{Y}_i(\mathbb{F}_q) \textrm{ and }h_i(P_i)=h_j(P_j) \textrm{ for all }i,j, 1\leq i,j \leq t\}.\]  See Figure \ref{fp} for a visualization when $t=2$.

The fiber product construction defines natural projection maps $g_j:\mathcal{X}\rightarrow \mathcal{Y}_j$.  The degrees of the projection maps are given by 
\[\textrm{deg}(g_j)=\prod_{i\neq j}\textrm{deg}(h_i).\]  These maps have the property that $h_i\circ g_i=h_j\circ g_j$ for all $i,j$, so let $g:\mathcal{X}\rightarrow\mathcal{Y}$ be the composite map, with $\textrm{deg}(g)=\prod\textrm{deg}(h_j)$.  Thus, the diagram in Figure \ref{cd} commutes.

\begin{figure}[h] 
\begin{tikzpicture}[node distance = 2cm, auto] 
  \node (Y) {$\mathcal{Y}$};
  \node (Y1) [node distance=1.4cm, left of=Y, above of=Y] {$\mathcal{Y}_2$};
    \node (Ys) [node distance=1.4cm, right of=Y, above of=Y] {$\mathcal{Y}_t$};
   \node (X) [node distance=1.4cm, left of=Ys, above of=Ys] {$\mathcal{X}$};
 \node (Y2) [node distance=1.4cm, left of=Y1, left of=Ys, below of=X] {$\mathcal{Y}_1$};
  \draw[->] (Y1) to node {$h_2$} (Y);
   \draw[->] (Y2) to node [swap] {$h_1$} (Y);
  \draw[->] (Ys) to node {$h_t$} (Y);
   \draw[->] (X) to node [swap] {$g_1$} (Y2);
  \draw[->] (X) to node {$g_2$} (Y1);
  \draw[->] (X) to node {$g_t$} (Ys);
  \path (Y1) -- node[auto=false]{\ \ \ldots } (Ys);
  \draw[->, bend left] (X) to node {$g$} (Y);
\end{tikzpicture}
  \caption{The fiber product $\mathcal{X}$ of $t$ curves $\mathcal{Y}_j$. }
  \label{cd}
  \end{figure}
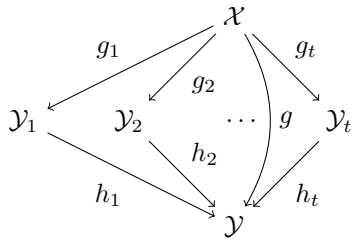
  
 Let $i\in[t]$.  Define the curve 
\[\tilde{\mathcal{Y}}_i= \mathcal{Y}_1\times_{\mathcal{Y}} \cdots \times_{\mathcal{Y}}\mathcal{Y}_{i-1}\times_{\mathcal{Y}}\mathcal{Y}_{i+1}\times_{\mathcal{Y}}\cdots \times_{\mathcal{Y}} \mathcal{Y}_t.\]
Then $\mathcal{X}=\mathcal{Y}_i\times_{\mathcal{Y}}\tilde{\mathcal{Y}}_i$. Denote the associated natural maps by
\[\tilde{g}_i: \mathcal{X} \rightarrow \tilde{\mathcal{Y}}_i \hspace{.25in} \textrm{ and } \hspace{.25in} \tilde{h}_i:\tilde{\mathcal{Y}}_i\rightarrow \mathcal{Y}.\] The degree of $\tilde{g}_i$ must be equal to the degree of $h_i$, denoted $d_{h_i}$.

The relationship of function fields in a fiber product is that $\mathbb{F}_q(\mathcal{Y})$ naturally embeds into $\mathbb{F}_q(\mathcal{Y}_i)$ for each $i$, and $\mathbb{F}_q(\mathcal{X})$ is isomorphic to the compositum of all the fields $\mathbb{F}_q(\mathcal{Y}_i)$.  In particular, fixing $i$, $1 \leq i \leq t$, then $g$,  $h_i$, and $g_i$ give rise to maps
$$
\begin{array}{cccc}
g^*: & \F_q(\mathcal{Y}) &\rightarrow & \F_q(\mathcal{X})\\
& f &\mapsto & f\circ g,
\end{array}
$$
$$
\begin{array}{cccc}
h_i^*: &\F_q(\mathcal{Y}) &\rightarrow &\F_q(\mathcal{Y}_i) \\
&f & \mapsto & f \circ h_i,
\end{array}
$$
and
$$
\begin{array}{cccc}
g_i^*: &\F_q(\mathcal{Y}_i) &\rightarrow &\F_q(\mathcal{X}) \\
&f & \mapsto & f \circ g_i.
\end{array}
$$
One may consider $g^*$,  $h_i^*$ and $g_i^*$ as embeddings so that we have
$$
\F_q(\mathcal{Y}) \xrightarrow{\sim} h_i^*(\F_q(\mathcal{Y})) \hookrightarrow \F_q(\mathcal{Y}_i)
 \xrightarrow{\sim} g_i^*(\F_q(\mathcal{Y}_i)) \hookrightarrow \F_q(\mathcal{X}).
$$

Then $\F_q(\mathcal{X})$ is the compositum of the fields $g_i^*\left(\F_q(\mathcal{Y}_{i})\right)$.  Also,  
\[g^*(\F_q(\mathcal{Y}))=\bigcap_{i=1}^t g_i^*(\F_q(\mathcal{Y}_i)).\]

Since any finite, separable algebraic extension of function fields is generated by a primitive element, there exists some function $x_i \in \mathbb{F}_q(\mathcal{Y}_i)$ with $\mathbb{F}_q(\mathcal{Y}_i)\cong h_i^*(\mathbb{F}_q(\mathcal{Y}))(x_i)$.  Thus we may construct the following diagram of function fields for a fiber product as in Figure \ref{cd2}.
\rmv{\begin{center}
\begin{tikzpicture}[node distance = 3cm, auto] 
  \node (Y) {$\mathbb{F}_q(\mathcal{Y})$};
  \node (X1) [node distance=2.5cm, left of=Y, above of=Y] {\begin{footnotesize}$h_1^*(\F_q(\mathcal{Y}))(x_1)$\end{footnotesize}};
    \node (Xs) [node distance=2.5cm, right of=Y, above of=Y] {\begin{footnotesize}$h_t^*(\F_q(\mathcal{Y}))(x_t)$\end{footnotesize}};
   \node (X) [node distance=2.5cm, left of=Xs, above of=Xs] {$\F_q(\mathcal{X})\cong g^*(\F_q(\mathcal{Y}))(h_1^*(x_1),... ,h_t^*(x_t))$};
 \node (X2) [node distance=2.5cm, right of=X1, left of=Xs, below of=X] {\begin{footnotesize}$h_2^*(\F_q(\mathcal{Y}))(x_2)$\end{footnotesize}};
  \draw[->]  (Y) to (X1);
   \draw[->] (Y) to  (X2);
  \draw[->] (Y) to  (Xs);
   \draw[->] (X1) to  (X);
  \draw[->] (X2) to (X);
  \draw[->] (Xs) to (X);
  \path (X2) -- node[auto=false]{\dots } (Xs);
\end{tikzpicture}
\end{center}}

\begin{figure}[h]
\begin{tikzpicture}[node distance = 3cm, auto] 
  \node (Y) {$\mathbb{F}_q(\mathcal{Y})$};
  \node (X1) [node distance=2.5cm, left of=Y, above of=Y] {\begin{footnotesize}$h_1^*(\F_q(\mathcal{Y}))(x_1)$\end{footnotesize}};
    \node (Xs) [node distance=2.5cm, right of=Y, above of=Y] {\begin{footnotesize}$h_t^*(\F_q(\mathcal{Y}))(x_t)$\end{footnotesize}};
   \node (X) [node distance=2.5cm, left of=Xs, above of=Xs] {$\F_q(\mathcal{X})\cong g^*(\F_q(\mathcal{Y}))(h_1^*(x_1),... ,h_t^*(x_t))$};
 \node (X2) [node distance=2.5cm, right of=X1, left of=Xs, below of=X] {\begin{footnotesize}$h_2^*(\F_q(\mathcal{Y}))(x_2)$\end{footnotesize}};
  \draw[->]  (Y) to (X1);
   \draw[->] (Y) to  (X2);
  \draw[->] (Y) to  (Xs);
   \draw[->] (X1) to  (X);
  \draw[->] (X2) to (X);
  \draw[->] (Xs) to (X);
  \path (X2) -- node[auto=false]{\dots } (Xs);
\end{tikzpicture}
 \caption{Function fields associated with the fiber product. }
  \label{cd2}
\end{figure}

\section{Construction of LRC($t$) } \label{construction_subsection}

Let $ \mathcal{Y}, \mathcal{Y}_1, \mathcal{Y}_2, \ldots, \mathcal{Y}_t$ be smooth, projective, absolutely irreducible algebraic curves over $\mathbb{F}_q$ with rational, separable maps $h_j: \mathcal{Y}_j \rightarrow \mathcal{Y}$, of degrees $d_{h_j}$ and let $\mathcal{X}$ be the fiber product $\mathcal{Y}_1\times_{\mathcal{Y}}\mathcal{Y}_2\times_{\mathcal{Y}}\dots \times_{\mathcal{Y}}\mathcal{Y}_t$.  As in Section \ref{Notation}, we obtain a projection map $g_j: \mathcal{X} \rightarrow  \mathcal{Y}_j$ of degree $d_{g_j}$ for  each $j$, $1 \leq j \leq t$, and a rational, separable map $g: \mathcal{X} \rightarrow \mathcal{Y}$ of degree $d_g$, so that the diagram in Figure \ref{cd} commutes. 

For each $j\in[t]$, let $x_j \in \F_q(\mathcal{Y}_j)$ be a primitive element of $\F_q(\mathcal{Y}_j)/h_j^*\left(\F_q(\mathcal{Y})\right)$ where $x_j$ is the root of a degree $d_{h_j}$ polynomial with coefficients in  $g_j^*(\F_q(\mathcal{Y}))$.  For convenience, denote $g_j^*(x_j)\in \F_q(\mathcal{X})$ by $x_j^*$.   Let $D_j$ be the divisor of the function $x_j^*\in \F_q(\mathcal{X})$.  Let $D_j=D_{j,+}-D_{j,-}$, where $D_{j,+}$ and $D_{j,-}$ are both effective; that is, $D_{j,+}=(x_j^*)_0$ is the zero divisor of $x_j^*$, and $D_{j,-}=(x_j^*)_{\infty}$ is the pole divisor of $x_j^*$.  Let $\textrm{deg}(D_{j,-})=d_{x_j}$ be the degree $x_j$ considered as  a function $\mathcal{Y}_j\rightarrow\mathbb{P}^1$.  Note that $d_{x_j}$ is not necessarily equal to $d_{h_j}$.  Then, if $x_j^*$ is viewed as a function $x_j^*:\mathcal{X}\rightarrow \mathbb{P}^1$, its degree is $d_{g_j}d_{x_j}$.  This yields \[g^*(\F_q(\mathcal{Y}))(x_1^*,x_2^*,\dots,x_t^*)\cong\F_q(\mathcal{X}).\]

Now choose simultaneously a divisor $D$ on $\mathcal{Y}(\F_q)$ and a set $S$ of points in $\mathcal{Y}(\F_q)$ as follows. Let $\tilde{D}=\sum_{i=1}^t h_i(D_{i,-})$, so $\textrm{supp}(\tilde{D})$ consists of all points of $\mathcal{Y}$ which, for some $i$, are the image under $h_i$ of a point on $\mathcal{Y}_i$ at which the function $x_i$ has a pole.  Then choose $D$ an effective divisor on $\mathcal{Y}(\F_q)$ of degree $\deg(D)=l$ and $S=\{P_1, \dots, P_s\} \subset \mathcal{Y}(\F_q)$ so that the following conditions are satisfied: 
\begin{itemize} 
 \item $|g^{-1}(P_i)\cap\mathcal{X}(\F_q)|=d_g,$ for all $i\in[s]$, 
 \item $S \cap \textrm{supp} (\tilde{D}) = \emptyset$,  
 \item $S\cap \textrm{supp}(D) = \emptyset$, 
 \item  $l< s$. 
 
 \end{itemize} 
Let $\{f_1, f_2,\dots, f_m\}$ be a basis for $\mathcal{L}(D)$, so $m=\ell(D)$. Note that since $l<s$, each non-zero function in $\mathcal{L}(D)$ will be non-zero when evaluated at some point in $S$. 
Then set 
$$B=g^{-1}(S).$$
Note that $n=|B|=d_gs$. Order the points in $B$ and denote them as $\{Q_1, Q_2, \ldots, Q_n\}$. For $Q_i \in B$, set
$$B^{(j)}(Q_i)=\tilde{g}_j^{-1}(\tilde{g}_j(Q_i))$$ and 
$$A^{(j)}(Q_i)=B^{(j)}(Q_i) \setminus \{ Q_i \}.$$

The recovery set $A_{ij}$ defined in Section~\ref{Notation} can be obtained from $A^{(j)}(Q_i)$ as follows: 

 \[ A_{ij} = \{k\in [n]: Q_k \in A^{(j)}(Q_i)\}.\] 
 Similarly,  $B_{ij} =  \{k\in [n]: Q_k \in B^{(j)}(Q_i)\}$. 

For $k\in [m]$, let $f_k^*=g^*(f_k)$. Then let 
\[ V=\textrm{Span}\{ f_k^{*}x_1^{*{e_1}}\cdots {x_t^*}^{e_t}: e_i\in\mathbb{Z}, 0\leq e_i\leq d_{h_i}-2\textrm{ for all } i\in [t], k\in [m] \}. \] 
Define
$C(D,B):=Im (ev_B)$ where 
$$
\begin{array}{llll}
ev_B: &V &\rightarrow &\F_q^{n} \\
&f& \mapsto & \left( f \left( Q_i\right) \right)_{i\in [n]}.
\end{array}
$$

%


This construction gives rise to the following theorem.
\begin{theorem} \label{construction}
Given curves $\{\mathcal{Y}_i\}_{i\in[t]}$, $\mathcal{Y}$, maps $\{h_i:\mathcal{Y}_i \rightarrow\mathcal{Y}\}_{i\in[t]}$, a divisor $D$ on $\mathcal{Y}(\F_q)$, and sets $S\subset\mathcal{Y}(\F_q)$ and $B=g^{-1}(S)$ all as described above, where $l=\textrm{deg}(D)\leq |S|$ and the quantity $d$ below is positive, the code $C(D,B)$ is an LRC($t$) with 
\begin{itemize}
\item length $n=|B|=d_g|S|$, 
\item dimension $$
\begin{array}{lll}
k&=&\ell(D)(d_{h_1}-1)(d_{h_2}-1) \cdots (d_{h_t}-1) \\
&\geq& (l-g_{\mathcal{Y}}+1)(d_{h_1}-1)(d_{h_2}-1) \cdots (d_{h_t}-1),
\end{array}$$
\item minimum distance $d \geq n-ld_g-\sum_{i=1}^t\left( d_{h_i}-2\right)\left(d_{g_i}d_{x_i}\right)$, and 
\item locality $(d_{h_1}-1, d_{h_2}-1, \dots, d_{h_t}-1)$.
\end{itemize}
\end{theorem}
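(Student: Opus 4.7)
The plan is to verify the five parameter claims in an order that lets the minimum-distance computation do double duty by also proving injectivity of $ev_B$.

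First I would dispatch the easy items. The length $n=d_g|S|$ follows immediately from $B=g^{-1}(S)$ and the hypothesis $|g^{-1}(P_i)\cap \mathcal{X}(\F_q)|=d_g$ for each $P_i\in S$. The assumptions $S\cap\mathrm{supp}(D)=\emptyset$ and $S\cap\mathrm{supp}(\tilde{D})=\emptyset$ keep every generator $f_k^*\,x_1^{*e_1}\cdots x_t^{*e_t}$ of $V$ regular on $B$, so $ev_B$ is well defined on $V$. For locality and availability, fix $Q_i\in B$ and $j\in [t]$ and restrict a function $f\in V$ to the fiber $B^{(j)}(Q_i)=\tilde{g}_j^{-1}(\tilde{g}_j(Q_i))$, which has $d_{h_j}$ points by the same full-fiber hypothesis. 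On this fiber each $f_k^*$ and each $x_l^*$ with $l\neq j$ is constant, while $x_j^*$ takes the $d_{h_j}$ distinct values cut out by its defining polynomial specialized at $\tilde{g}_j(Q_i)$. Thus $f|_{B^{(j)}(Q_i)}$ is a polynomial in $x_j^*$ of degree at most $d_{h_j}-2$ and can be recovered at $Q_i$ from its $d_{h_j}-1$ other values by Lagrange interpolation. Pairwise disjointness of the $A^{(j)}(Q_i)$ follows because a point in two such sets would have to agree with $Q_i$ in every coordinate, forcing equality with $Q_i$.

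The core of the argument is the minimum distance bound. For any nonzero $f\in V$ I would bound $\deg((f)_\infty)$ by using $(f_k^*)_\infty \le g^*(D)$, which has degree $d_g\,l$, together with the fact that $x_j^*$, regarded as a map $\mathcal{X}\to\mathbb{P}^1$, has degree $d_{g_j}d_{x_j}$; subadditivity of pole divisors under sums and products then yields $\deg((f)_\infty)\le l\,d_g+\sum_{j=1}^t(d_{h_j}-2)\,d_{g_j}d_{x_j}$. Regularity on $B$ together with $\deg((f)_0)=\deg((f)_\infty)$ forces the same upper bound on the number of zeros of $f$ in $B$, giving $d\ge n - l\,d_g - \sum_j (d_{h_j}-2)\,d_{g_j}d_{x_j}$. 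When this lower bound is positive it is strictly less than $n$, so $ev_B$ is injective on $V$ and $\dim C(D,B)=\dim_{\F_q} V$.

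It remains to compute $\dim_{\F_q} V$. Here I would argue that the hypothesis $|g^{-1}(P_i)|=d_g=\prod_j d_{h_j}$ at points of $S$ forces $[\F_q(\mathcal{X}):g^*(\F_q(\mathcal{Y}))]=\prod_j d_{h_j}$, and that the monomials $x_1^{*e_1}\cdots x_t^{*e_t}$ with $0\le e_j\le d_{h_j}-1$ form a basis of $\F_q(\mathcal{X})$ over $g^*(\F_q(\mathcal{Y}))$; truncating to $e_j\le d_{h_j}-2$ still gives a linearly independent set, and tensoring with the $\F_q$-basis $\{f_k^*\}$ of $g^*(\mathcal{L}(D))$ yields $\dim V=\ell(D)\prod_j(d_{h_j}-1)$, with Riemann--Roch on $\mathcal{Y}$ supplying the stated lower bound on $\ell(D)$. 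I expect the main obstacle to be precisely this field-theoretic step, namely deducing the linear disjointness of the $g_j^*(\F_q(\mathcal{Y}_j))$ over $g^*(\F_q(\mathcal{Y}))$ (and hence the extension degree) from the combinatorial full-fiber condition on $S$, since this one fact underwrites both the basis count here and the Lagrange interpolation used above.
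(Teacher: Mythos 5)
Your proposal is correct and follows essentially the same route as the paper: the distance bound comes from degree-counting the pole divisor of $f\in V$ inside $\mathcal{L}\bigl(g^{-1}(D)+\sum_j(d_{h_j}-2)g_j^{-1}(D_{j,-})\bigr)$ and using that principal divisors have degree zero, and local recovery comes from restricting $f$ to the fiber $B^{(j)}(Q_i)$, where it becomes a degree-$(d_{h_j}-2)$ polynomial in $x_j^*$ recoverable by interpolation. You are in fact somewhat more explicit than the paper on the dimension claim, spelling out that injectivity of $ev_B$ follows from positivity of the distance bound and that the monomial count rests on the linear disjointness of the $g_j^*(\F_q(\mathcal{Y}_j))$ over $g^*(\F_q(\mathcal{Y}))$ (which the paper builds into its setup via the compositum structure of the fiber product); this is a faithful filling-in rather than a different argument.
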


\begin{proof}
The length of  $C(D,B)$ is given by definition.  The minimum distance is bounded by considering the number of zeros that a function $f\in V$ can have on the curve $\mathcal{X}$.  Note that 
\[f_k^*x_1^{*e_1}x_2^{*e_2}\cdots x_t^{*e_t}\in\mathcal{L}\left(g^{-1}(D)+\sum_{j=1}^t\left(d_{h_j}-2\right)g_j^{-1}(D_{j,-})\right)\subset\mathbb{F}_q(\mathcal{X}),\] where $D$ has degree $l$, $g^{-1}(D)$ has degree $l d_g$, and $g_j^{-1}(D_{j,-})$ has degree equal to $d_{g_j}d_{x_j}$. Thus the function $f$ can have at most $ld_g+\sum_{j=1}^t(d_{h_j}-2)d_{g_j}d_{x_j}$ poles on $\mathcal{X}$ (counted with multiplicity). As any principal divisor has degree 0, $f$ may have at most this many zeros on $\mathcal{X}$ (counted with multiplicity). The dimension is as stated, because the evaluation map is injective for $e_i$ in the ranges described.  

Next, we see how local recovery is achieved. Say that $\mathbf{c}=(c_1,\dots, c_n)\in C(D,B)$.  Fix  $j\in [t]$.  Let 
\[I=\{\mathbf{e}=(e_1,e_2,\dots,e_t):e_i\in \mathbb{Z}, 0\leq e_i\leq d_{h_i}-2 \textrm{ for all } i\},\] and \[I_j=\{\tilde{\mathbf{e}}=(e_1,e_2,\dots,e_{j-1}, e_{j+1}, \dots, e_t):e_i\in \mathbb{Z}, 0\leq e_i\leq d_{h_i}-2 \textrm{ for all } i\neq j\}.\] 

Fix $i\in[n]$. To use the $j$th recovery set to recover the value $c_i$, consider that for some $f\in V$, $c_i=f(Q_i)$.  Recall that $m=\ell(D)$. Say 
\[f=\sum_{k=1}^m a_{k}f_k^{*}\sum_{\mathbf{e}\in I} b_{k,\mathbf{e}}\left(\prod_{w\in[t]}{x_{w}^*}^{e_w}\right),\] where $a_k,b_{k,\mathbf{e}}\in\F_q$ are constants depending on $f$. Rearranging the sum, 
\[f=\sum_{e_j=0}^{d_j-2}b_{e_j}{x_j^*}^{e_j}\sum_{k=1}^m a_{e_j,k}f_k^{*}\sum_{\tilde{\mathbf{e}}\in I_j}b_{e_j,k,\tilde{\mathbf{e}}}\left(\prod_{w\in[t], w\neq j}{x_w^{*}}^{e_w}\right),\] 
where $b_{e_j}, a_{e_j,k}, b_{e_j,k,\tilde{\mathbf{e}}}\in\F_q$ are new constants depending on $f$.
Then
\[c_i=f(Q_i)=\sum_{e_j=0}^{d_j-2}b_{e_j}({x_j^*(Q_i)})^{e_j}\sum_{k=1}^m a_{e_j,k}f_k^{*}(Q_i)\sum_{\tilde{\mathbf{e}}\in I_j}b_{e_j,k,\tilde{\mathbf{e}}}\left(\prod_{w\in[t], w\neq j}({x_w^{*}(Q_i)})^{e_w}\right).\]
By definition this is just
\[\sum_{e_j=0}^{d_j-2}b_{e_j}({x_j(g_j(Q_i))})^{e_j}\sum_{k=1}^m a_{e_j,k}f_k(g(Q_i))\sum_{\tilde{\mathbf{e}}\in I_j}b_{e_j,k,\tilde{\mathbf{e}}}\left(\prod_{w\in[t], w\neq j}({x_w(g_w(Q_i))})^{e_w}\right).\]

For all $w\in[t]$, $w\neq j$, let $x_w(g_w(Q_i))=\beta_{w}\in\F_q$.  Define $f_k(g(Q_i))=\alpha_k\in\F_q$.  Putting this together, \[c_i=\sum_{e_j=0}^{d_j-2}b_{e_j}(x_j^*(Q_i))^{e_j}\sum_{k=1}^m a_{e_j,k}\alpha_k\sum_{\tilde{\mathbf{e}}\in I_j}b_{e_j,k,\tilde{\mathbf{e}}}\left(\prod_{w\in[t], w\neq j}(\beta_w)^{e_w}\right).\] Combining constants and redefining $b_{e_j}$ appropriately, \[c_i=\sum_{e_j=0}^{d_j-2}b_{e_j}(x_j^{*}(Q_i))^{e_j}.\] In this framework,  $c_i$ can be seen as the evaluation at $x_j^*(Q_i)$ of a polynomial in $x$:  
\[\tilde{f}(x):=\sum_{e_j=0}^{d_j-2}b_{e_j}(x)^{e_j},\] i.e., $c_i= f(Q_i)=\tilde{f}(x_j^{*}(Q_i))$. 

For all $Q\in B^{(j)}(Q_i)$, the definition of $ B^{(j)}(Q_i)$ implies that $\tilde{g}_j(Q)=\tilde{g}_j(Q_i).$  That means 
\begin{eqnarray}\label{first}
g_w(Q)=g_w(Q_i)
\end{eqnarray} for all $w\in [t]$, $w\neq j$.  Composing with $\tilde{h}_j$ in  (\ref{first}), we have 
\begin{eqnarray}\label{second}
g(Q)=\tilde{h}_j(\tilde{g}_j(Q))=\tilde{h}_j(\tilde{g}_j(Q_i))=g(Q_i).
\end{eqnarray} Then composing with $f_k$ for $k\in[m]$ in (\ref{second}), we have $f_k(g(Q))=f_k(g(Q_i))$.  Similarly, composing with the function $x_w$ in (\ref{first}) yields $x_w(g_w(Q_i))=x_w(g_w(Q))$ for $w\in[t]$ with $w\neq j$.  That means if $r\in A_{ij}$, then \[c_r=f(Q_r)=\sum_{e_j=0}^{d_j-2}b_{e_j}(x(Q_r))^{e_j}=\tilde{f}(x_j^*(Q_r)).\] Thus $c_r$ is the evaluation of $\tilde{f}$ at $x=x_j^*(Q_r)$.  Since it must be that there are no two points $Q_{a}\neq Q_{b}$ in $B^{(j)}(Q_i)$ where $x_j^*(Q_a)=x_j^*(Q_b)$, the values of $c_r$ for all $r\in A_{ij}$ are the evaluations of $\tilde{f}$ at $d_{h_j}-1$ distinct values in $\F_q$.  Thus the polynomial $\tilde{f}$ may be interpolated to determine $c_i=\tilde{f}(x_j^*(Q_i))$. 

\end{proof}

\begin{remark}
For $\rho\geq1$ an integer, let 
\[V_{\rho}=\textrm{Span}\{ f_k^{*}x_1^{*{e_1}}\cdots {x_t^*}^{e_t}: 0\leq e_j\leq d_{h_j}-1-\rho \textrm{ for all }j\in[t], k\in[m] \}. \] As observed in \cite{Barg16}, applying the same construction with $V_{\rho}$ in place of $V$ allows up to $\rho$ erasures to be recovered by each recovery set, increasing the local distance of $C(D,B)$.  However, this also reduces the dimension of the code to $$m(d_{h_1}-\rho)(d_{h_2}-\rho) \cdots (d_{h_t}-\rho),$$ so this must be seen as a tradeoff between dimension and recovery.  Throughout the rest of the paper, we use the maximal dimension construction from Theorem \ref{construction}, but the modification above may be made to recover more erasures if desired.
\end{remark}

Given curves $\{\mathcal{Y}_j\}_{j\in[t]}$ and $\mathcal{Y}$, as well as appropriate maps $\{h_j\}_{j\in[t]}$, Theorem \ref{construction} describes a code on the fiber product where knowledge of the parameters is built strictly on the understanding of these maps.  This may be considered as a `bottom-up' approach to building LRC($t$)s.  It is sometimes also possible to take a `top-down' approach using the automorphism group of a curve $\mathcal{X}$ to create maps to quotient curves $\mathcal{Y}_j$. This approach is described in the following corollary. 

\begin{corollary} \label{aut_gp_construction}
Suppose that $\mathcal{X}$ is a curve such that $Aut(\mathcal{X})$ has subgroups $T_1, \dots, T_t$ so that the subgroup generated by these groups is an associative semi-direct product within Aut($\mathcal{X}$). Then Theorem \ref{construction} applies to give locally recoverable codes from $\mathcal{X}$ by taking the $\mathcal{Y}_j$ to be the the associated quotient curves $\mathcal{Y}/T_j$  and  setting $\mathcal{Y}:=\mathcal{X}/(T_1\rtimes\dots \rtimes T_t)$.
\end{corollary}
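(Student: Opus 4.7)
The plan is to verify that the data produced by the group-theoretic construction satisfies the hypotheses of Theorem \ref{construction}: namely, that $\mathcal{X}$ is (up to isomorphism) the fiber product $\mathcal{Y}_1 \times_{\mathcal{Y}} \cdots \times_{\mathcal{Y}} \mathcal{Y}_t$ and that the quotient maps $h_j:\mathcal{Y}_j\to\mathcal{Y}$ are rational and separable. Once this identification is in place, the corollary is a direct citation of Theorem \ref{construction}.

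First I would set up the Galois picture. Let $G := T_1\rtimes\cdots\rtimes T_t\leq \mathrm{Aut}(\mathcal{X})$, which is a bona fide subgroup precisely because the semi-direct product is assumed to be associative. Then $\mathbb{F}_q(\mathcal{X})/\mathbb{F}_q(\mathcal{X})^G$ is a Galois extension with group $G$, and setting $\mathcal{Y}:=\mathcal{X}/G$ realizes $\mathbb{F}_q(\mathcal{Y})=\mathbb{F}_q(\mathcal{X})^G$ (see \cite{Stichtenoth}, Ch.~III). By the Galois correspondence each intermediate fixed field $\mathbb{F}_q(\mathcal{Y}_j):=\mathbb{F}_q(\mathcal{X})^{T_j}$ corresponds to the quotient curve $\mathcal{Y}_j=\mathcal{X}/T_j$, giving rational maps $\mathcal{X}\xrightarrow{g_j}\mathcal{Y}_j\xrightarrow{h_j}\mathcal{Y}$ with $\deg(g_j)=|T_j|$ and $\deg(h_j)=|G|/|T_j|$. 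Separability of every sub-extension of a Galois (hence separable) extension is automatic, so each $h_j$ is the rational, separable map required by Theorem \ref{construction}.

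The heart of the proof is the fiber-product identification $\mathcal{X}\cong \mathcal{Y}_1\times_{\mathcal{Y}}\cdots\times_{\mathcal{Y}}\mathcal{Y}_t$. By the function-field description in Section \ref{Notation} this amounts to the two statements
$$\mathbb{F}_q(\mathcal{X})=\mathbb{F}_q(\mathcal{Y}_1)\cdots\mathbb{F}_q(\mathcal{Y}_t) \quad\text{and}\quad \bigcap_{j=1}^t \mathbb{F}_q(\mathcal{Y}_j)=\mathbb{F}_q(\mathcal{Y}).$$
Under the Galois correspondence these translate, respectively, to $\bigcap_{j}T_j=\{e\}$ and $\langle T_1,\dots,T_t\rangle=G$; the second is the very definition of $G$. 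For the first, the associativity hypothesis gives each $g\in G$ a unique factorization $g=\tau_1\tau_2\cdots\tau_t$ with $\tau_j\in T_j$, and an element lying in every $T_j$ would admit the incompatible factorizations $g\cdot e\cdots e$, $e\cdot g\cdot e\cdots e$, $\dots$ unless $g=e$. This forces $\bigcap_{j}T_j=\{e\}$ and completes the identification.

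The main obstacle, and the sole reason the associativity hypothesis appears here but not in the $t=2$ case of \cite{Barg16}, is precisely this equality $\bigcap_j T_j=\{e\}$: for $t=2$ the single semi-direct product relation $T_1\cap T_2=\{e\}$ settles the matter for free, whereas for $t\geq 3$ the intersection can fail to be trivial unless the iterated product $T_1\rtimes\cdots\rtimes T_t$ can genuinely be assembled associatively inside $\mathrm{Aut}(\mathcal{X})$ (compare Remark \ref{group bad remark}). Once the fiber-product identification is verified, one chooses a divisor $D$ and a set $S$ on $\mathcal{Y}(\mathbb{F}_q)$ satisfying the four bullet points in Section \ref{construction_subsection}, forms $B=g^{-1}(S)$, and applies Theorem \ref{construction} verbatim to produce the desired LRC$(t)$ with locality $(|G|/|T_1|-1,\dots,|G|/|T_t|-1)$.
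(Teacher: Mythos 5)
The paper states this corollary without proof, so your write-up has to stand on its own, and its central step contains a genuine gap. You claim that the identification $\mathcal{X}\cong\mathcal{Y}_1\times_{\mathcal{Y}}\cdots\times_{\mathcal{Y}}\mathcal{Y}_t$ ``amounts to'' the two conditions that the compositum of the $\mathbb{F}_q(\mathcal{Y}_j)$ is $\mathbb{F}_q(\mathcal{X})$ and that their intersection is $\mathbb{F}_q(\mathcal{Y})$. Those conditions are necessary but not sufficient: a fiber product of covers of degrees $d_{h_1},\dots,d_{h_t}$ has degree $\prod_j d_{h_j}$ over $\mathcal{Y}$, and this is what Theorem \ref{construction} actually uses (in $\deg(g)=\prod_j d_{h_j}$, in $\deg(g_j)=\prod_{i\neq j}d_{h_i}$, in the linear independence of the monomials $x_1^{*e_1}\cdots x_t^{*e_t}$ behind the dimension formula, and in each fiber of $\tilde{g}_j$ having exactly $d_{h_j}$ points). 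With your choice $\mathcal{Y}_j=\mathcal{X}/T_j$ one has $d_{h_j}=[G:T_j]$, so $\prod_j d_{h_j}=|G|^{t}/\prod_j|T_j|=|G|^{t-1}$, whereas $[\mathbb{F}_q(\mathcal{X}):\mathbb{F}_q(\mathcal{Y})]=|G|$. These agree only for $t=2$. For $t\geq 3$ the fiber product of the $\mathcal{X}/T_j$ over $\mathcal{X}/G$ is reducible and strictly larger than $\mathcal{X}$ (take $G=(\mathbb{Z}/p\mathbb{Z})^3$ with the $T_j$ the coordinate lines: the compositum of the fixed fields has degree $p^3$ over the base while the fiber product has degree $p^6$), so Theorem \ref{construction} does not apply as you invoke it, and the locality $(|G|/|T_j|-1)_j$ you conclude is not what the construction delivers.

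The repair is to quotient by the complementary subgroups rather than the factors themselves: set $\tilde{T}_j=T_1\rtimes\cdots\rtimes T_{j-1}\rtimes T_{j+1}\rtimes\cdots\rtimes T_t$ (associativity is needed again so that each $\tilde{T}_j$ is a genuine subgroup) and take $\mathcal{Y}_j=\mathcal{X}/\tilde{T}_j$, so that $d_{h_j}=[G:\tilde{T}_j]=|T_j|$, $\prod_j d_{h_j}=|G|=\deg(g)$, the degree obstruction vanishes, and the locality becomes $(|T_1|-1,\dots,|T_t|-1)$. This is exactly the situation in Section \ref{LRCt_section}, where $G\cong(\mathbb{Z}/p\mathbb{Z})^t$ and each $h_i$ has degree $p$, i.e., each $\mathcal{Y}_{a_i}$ is the quotient by an index-$p$ subgroup of order $p^{t-1}$; for $t=2$ the two choices coincide up to relabeling, which is why your argument matches the Suzuki example. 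Two smaller points: the paper's stated purpose of the associativity hypothesis (Remark \ref{group bad remark}) is only that $G$, and hence $\mathcal{Y}=\mathcal{X}/G$, be well defined, not your unique-factorization argument for $\bigcap_j T_j=\{e\}$ (pairwise triviality of $T_{j+1}\cap(T_1\rtimes\cdots\rtimes T_j)$ is already built into an iterated semidirect product); and one must still verify the nonemptiness of the set $S$ of unramified, split points before Theorem \ref{construction} produces a nontrivial code.
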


\begin{remark} \label{group bad remark} The semi-direct product must be associative so that the curve $\mathcal{Y}$ is well defined.  For $t>2$, the requirement that the semi-direct product is associative is far from trivial. Another difficulty with the 
`top-down' approach of Corollary \ref{aut_gp_construction} is explicitly writing down equations for $\mathcal{Y}_j$, the functions $x_j$, and the spaces $\mathcal{L}(D)$ when starting with a given model of $\mathcal{X}$.  \end{remark}

\begin{remark} \label{differences remark}
Our construction builds on that of \cite[Theorem 5.2]{Barg16}, where the authors construct LRC(2)s from fiber products and bound the parameters of the resulting codes. In addition to carrying out the generalization to LRC($t$)s for $t\geq 2$, the bounds in Theorem \ref{construction} differ in some key ways.  First, \cite[Theorem 5.2]{Barg16} relies on the parameter $h$, which is the degree (as a function to $\mathbb{P}^1$) of a primitive element $x$ generating the full extension $\F_q(\mathcal{X})/g^*(\F_q(\mathcal{Y}))$.  This primitive element is not canonical.  It is known that this primitive element can be generated as a linear combination of the primitive elements $x_i^*$ of the extensions $\F_q(\mathcal{Y}_i)/h_i^*(\F_q(\mathcal{Y}))$, but not every linear combination results in a primitive element.  Also, the degree $h$ may vary depending on the the primitive element chosen.  The approach in Theorem \ref{construction} entirely avoids the issue of a primitive element for $\F_q(\mathcal{X})/g^*(\F_q(\mathcal{Y}))$ and relies only on the primitive elements for the intermediate extensions.
\end{remark}

As mentioned in the Section \ref{Introduction}, the curves considered in this paper are maximal.  Precisely, this means their numbers of rational points meet the upper Hasse-Weil bound, so $|\mathcal{C}\left(\mathbb{F}_q\right)| = q+1+2g_{\mathcal{C}}\sqrt{q}$, where $g_{\mathcal{C}}$ is the genus of the curve $\mathcal{C}$.  Maximal curves have played an important role in coding theory, especially in the construction of algebraic geometry codes, as they support the construction of long codes over relatively small fields, with minimum distance bounded below based on geometric arguments. Their utility in the construction of locally recoverable codes comes from this as well.  Generally, if $|\mathcal{X}(\F_q)|$ is large, then the hope is that one can find appropriate map $g:\mathcal{X} \rightarrow \mathcal{Y}$ so that $|B|$ is large. Focusing on the intermediate curves, if $|\mathcal{Y}_j(\F_q)|$ is large for each $j$, and the maps $h_j$ have limited and aligned ramification, the set $|B|$ can also be large.  Moreover, some families of maximal curves (such as Hermitian and Suzuki) have large automorphism groups which may allow for more choices when constructing recovery sets (equivalently the maps $g_j:\mathcal{Y}_j \rightarrow \mathcal{Y}$).


\section{LRC($t$) codes, erasure recovery, and error correction} \label{recovery_issues}

As described in Section \ref{Introduction}, locally recoverable codes have been developed with the primary goal of facilitating convenient recovery of erasures, potentially created by server failure in distributed storage systems.  The construction for $C(D,B)$ described in Section \ref{construction_subsection} results in $t$ disjoint recovery sets for each location. 
Since recovery sets are defined by fibers over points of the curves $\tilde{\mathcal{Y}_j}$, the position $i$ is in the $j$th recovery set for the position $k$  if and only if position $k$ is in the $j$th recovery set for $c_i$; i.e., for all $j \in [t]$,
$$ k\in A_{ij} \Leftrightarrow i\in A_{kj}.$$

We say that such recovery sets are symmetric.  Note that for each $j\in [t]$ there exists some $I\subseteq [n]$ such that $\{B_{ij}\}_{i\in I}$ is a partition of $[n]$. 

This section addresses two questions that arise from the recovery procedure for symmetric recovery sets.  First, what benefit is provided by having a large number of recovery sets?  Second, how does the number of recovery sets relate to minimum distance and global error correction?

First, we note that for erasure of a small fraction of locations in known positions, it can be much more efficient to use local recovery over global error correction, even if multiple local recovery sets are required.  Consider a locally recoverable code $C$ of length $n$ and availability $t$ with symmetric recovery sets.  Say that $\epsilon$ erasures occur.  For global error correction, we assume that all $n-\epsilon$ known locations must be consulted.  In general, we see that if a code has locality $t$ and symmetric recovery sets, then the fact that recovery sets are transverse implies that $lcm \left\{ (r_j+1): 1 \leq j \leq t \right\}$ divides $n$. Let $m_t(\epsilon)$ be the maximum number of positions that need to be consulted to recover $\epsilon$ erasures using local recovery. If there is a pattern of $\epsilon$ erasures that may not be locally recoverable by a code of availability $t$, then $m_t(\epsilon)=\infty$.  

If only one position $c_P$ has been erased and $C$ admits a single local recovery set of size $r_1$ for each position, then $r_1$ positions may be consulted to recover $c_P$, and $m_1(1)\leq r_1$.  We may assume $n=s(r_1+1)$ for some integer $s>1$, so local recovery saves at least $(s-1)(r_1+1)$ consultations. 
 
Now consider the possibility that two positions $c_P$ and $c_Q$ were erased.  If $C$ has only one recovery set for each position and the recovery set of $c_P$ contains $c_Q$ (and vice versa, by symmetry), then $c_P$ and $c_Q$ are not recoverable using that recovery set.  However, if $C$ has availability $2$, then $C$ has two disjoint recovery sets for each position, and the fact that the first recovery set of $c_P$ contains $c_Q$ implies that the second recovery set does not (and similarly for $c_Q$).  If $C$ has locality $(r_1, r_2)$, then $c_P$ and $c_Q$ can both be recovered with at most $r_1+r_2$ consultations.  We have saved at least $(s(r_1+1)-2)-(r_1+r_2)$ consultations.  

Without loss of generality, assume $r_1=\max_{1\leq j\leq t}\{r_j\}$.  For a general code of availability $t$, assuming that $t$ is sufficiently large to recover $\epsilon$ erasures, recovery will require at most $\epsilon$ recovery sets be consulted, resulting in a total number of 
\[m_t(\epsilon)\leq \epsilon r_1\] consultations in local recovery, in comparison with at least $s(r_1+1)-\epsilon$ consultations for global error correction.  This also assumes that the minimum distance of the code is sufficiently large to correct $\epsilon$ errors, which is not necessarily implied by the construction. Clearly, if $s$ is close in size to $\epsilon$, then there is little difference in the numbers of consultations required for local recovery and for error correction. However, our construction generally results in relatively large $s$. In all our examples, we have that $\prod_{j=1}^{t}(r_j+1) $ divides $n$, so the savings are significant for moderate size $\epsilon$.

In fact, the interplay between parameters becomes complicated as the number of potential failures grows. The following result demonstrates that more than two recovery sets become necessary rather quickly.
\begin{proposition}\label{neededlocality}
Let $b(\epsilon)$ be the availability required for a locally recoverable code $C$ with symmetric recovery sets to be capable of locally recovering any pattern of $\epsilon$ erasures. Then
\begin{itemize}
\item $b(1)=1$,
\item $b(2)=2$,
\item $b(3)=2$, and
\item $b(4)>3$.
\end{itemize} 

\end{proposition}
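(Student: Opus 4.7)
My plan is to reduce each of the four claims to a small combinatorial fact about the \emph{recovery graph} of a set of erased positions.  For a code with symmetric recovery sets and availability $t$, the relation $Q \in A_{P,j} \iff P \in A_{Q,j}$ lets me edge-color, by the index $j$, any pair $\{P,Q\}$ that lies together in some $j$th recovery set.  Direct local recovery of an erased position $P$ is then possible iff some color is absent from the edges incident at $P$ in this graph.

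The cases $b(1)=1$ and $b(2)=2$ are immediate.  For $b(1)=1$, the single recovery set of an erased position cannot contain the position itself since $A_P \subseteq [n] \setminus \{P\}$.  For $b(2) \leq 2$, the two disjoint recovery sets of $P$ contain a second erasure $Q$ at most once, so one remains clean, and symmetry treats $Q$ likewise.  For $b(2) \geq 2$, under availability $1$ any $Q \in A_P$ forces the erased pair $\{P,Q\}$ to have each sole recovery set contaminated.

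For $b(3)=2$, the lower bound $b(3)\geq b(2)=2$ is monotonicity (a code recovering all $3$-patterns surely recovers all $2$-patterns by leaving a coordinate unerased).  For the upper bound, given three erasures $\{P,Q,R\}$ under availability $2$: either some erased vertex has both of the others in the same one of its two recovery sets, whereupon its other recovery set is clean for direct recovery; or the three edges of $K_3$ receive pairwise distinct colors, producing a proper $2$-edge-coloring of $K_3$, contradicting $\chi'(K_3)=3$.  Once one position is recovered, the remaining two form a $2$-erasure pattern handled by the $b(2)\leq 2$ argument.

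For $b(4)>3$, the combinatorial input is that $\chi'(K_4)=3$, realized by the $1$-factorization whose color classes are the three perfect matchings $\{PQ,RS\}$, $\{PR,QS\}$, $\{PS,QR\}$; each vertex is incident to all three colors.  My plan is to exhibit a concrete availability-$3$ code with symmetric recovery sets in which four coordinates $\{P,Q,R,S\}$ realize exactly this colored $K_4$; for such a code, erasing these four positions leaves every recovery set of every erased position contaminated by precisely one other erasure, blocking every possible $\phi_{ij}$ and every iterative local recovery step.  The main obstacle is exhibiting such a code while maintaining availability $3$ globally (not merely on the four adversarial coordinates): the straightforward fiber-product grids of Section~\ref{construction_subsection} do not contain this $K_4$ pattern, because in a product of three ``direction'' coordinates two points share a line iff they agree in all but one coordinate, which is incompatible with a complete symmetric recovery graph on four points; so I plan instead to write down a small linear code with hand-picked parity checks whose recovery structure on four chosen positions is exactly the $1$-factorized $K_4$, and verify directly that no $\phi_{ij}$ applies and no iteration can start.
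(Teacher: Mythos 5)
Your argument is, at its core, the same as the paper's: the case analysis for $b(3)=2$ (if every erased position were blocked, the three erasures would induce a proper $2$-edge-coloring of $K_3$, which is impossible, so some position has a clean recovery set and the problem reduces to $b(2)=2$) and the $1$-factorization of $K_4$ by its three perfect matchings for $b(4)>3$ are exactly the combinatorial facts the paper's proof rests on, merely rephrased in edge-coloring language. Be slightly careful with your stated dichotomy for $b(3)$: the negation of ``some vertex has both other erasures in one recovery set'' is not ``all three edges are present with pairwise distinct colors''; the clean statement is that if every erased vertex is blocked then each must see one other erasure in each of its two sets, which forces the proper $2$-edge-coloring and the contradiction.

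The one place you go beyond the paper is in demanding that the colored $K_4$ actually be realized by an availability-$3$ code with symmetric recovery sets; the paper simply writes ``suppose that'' the six block-equalities hold and concludes. Your concern is legitimate, and your observation that the three-fold fiber-product codes of Section \ref{construction_subsection} cannot contain this configuration is correct, but as written your proof of $b(4)>3$ is only a plan (``I plan instead to write down a small linear code\ldots''), so this step is incomplete. It is easy to close: the length-$4$ repetition code over $\F_q$, with the three recovery partitions given by the three perfect matchings of $\{1,2,3,4\}$, has availability $3$, symmetric recovery sets of locality $(1,1,1)$, and realizes the $1$-factorized $K_4$ exactly; erasing all four coordinates contaminates every recovery set of every erased position. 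If you prefer a pattern that does not erase the entire word, take a direct sum with a second copy of the code, or use three partitions of a larger index set into bigger blocks (pairwise intersecting in at most one position) with a parity check imposed on each block. With such a witness supplied, your argument is complete and, on this point, more rigorous than the paper's own.
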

\begin{proof}
 
It is not difficult to see that $b(1)=1$ and $b(2)=2$ based on the situations described above.  To see that $b(3)=2$, assume without loss of generality that positions 1, 2, and 3 have been erased. Availability $t=1$ is clearly not sufficient, because it could be that $B_{11} = B_{21} = B_{31}$, in which case no $A_{i1}$ is complete. However, $t=2$ is sufficient; if $B_{11} = B_{21} = B_{31}$, then 
\[ B_{12}\cap B_{22}= B_{12}\cap B_{32} = \emptyset, \]
since distinct recovery sets for a fixed position are disjoint. Thus, position 1 may be recovered from the recovery set $A_{12}$, leaving two erased positions, which can be recovered with two recovery sets since $b(2)=2$. If $B_{11}\neq B_{21}$ but $B_{11} = B_{31}$, then position 2 can be recovered from $A_{21}$, again leaving two erased positions and two recovery sets. 

To see that $b(4)>3$, consider the following scenario.  WLOG, assume that positions $1, 2, 3,$ and $4$ have been erased, and suppose that: 
\begin{itemize}
\item $B_{11} = B_{21}$ and $B_{31}=B_{41}$,
\item $B_{12} = B_{32}$ and $B_{22} = B_{42}$,
\item $B_{13} = B_{43}$ and $B_{23} = B_{33}$. 
\end{itemize}
Clearly, none of the three recovery sets can be used to recover any of the erased positions.  Therefore $b(4)>3$.

\end{proof}

%
%
%
%
%
%

Note that the recovery procedure and all discussion to this point assume that the positions of the erased locations are known. This is not the case in general error correction, so a code that is capable of restoring $\epsilon$ erasures may not be capable of correcting $\epsilon$ global errors.  Minimum distance bounds give some indication of the global error correction capability of the code.  The minimum distance bounds in Theorem \ref{construction} are based on the divisors of certain functions.  For theoretical purposes, however, we would like to bound the minimum distance without explicitly constructing the given field extensions as in Section \ref{construction_subsection}. More generally, one might wish to know if an LRC($t$) is actually an error-correcting code, regardless of its construction. It is interesting to consider how the presence of recovery sets influences the potential error-correcting capability of a locally recoverable code, independent of how the code itself is defined. With this in mind, some very modest bounds on minimum distance can be derived by considering that, by construction, there exists an algorithm for correcting a certain number of erasures.

\begin{proposition}\label{distance}
Let $C$ be a locally recoverable code of availability $t$ with symmetric recovery sets and minimum distance $d$.  Then 
\begin{itemize}
\item if $t\geq 1$, then $d\geq 2$,
\item if $t\geq 2$, then $d\geq 3$.

\end{itemize}
\end{proposition}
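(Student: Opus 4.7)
The plan is to invoke the elementary fact that a linear code which can recover every pattern of $e$ erasures (with known erasure positions) must have minimum distance at least $e+1$; otherwise, two distinct codewords differing in at most $e$ coordinates would become indistinguishable after erasing those coordinates. Both bounds then reduce to exhibiting an explicit erasure-recovery procedure directly from the availability structure already built into $C$.

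For the first bullet, when $t \geq 1$ each position $i$ admits a recovery set $A_{i1} \subseteq [n]\setminus\{i\}$; if a single erasure occurs at $c_i$, every coordinate in $A_{i1}$ is known and the recovery function $\phi_{i1}$ produces $c_i$. Hence any single erasure is correctable and $d \geq 2$.

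For the second bullet, suppose positions $i$ and $k$ are both erased. Since $A_{i1}$ and $A_{i2}$ are disjoint subsets of $[n]\setminus\{i\}$, at most one of them can contain $k$; the other consists entirely of unerased coordinates and can be used to recover $c_i$. Applying the identical argument at position $k$ recovers $c_k$. Thus every two-erasure pattern is correctable, and $d \geq 3$ follows from the observation in the opening paragraph.

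The only point requiring any care is the initial distance-versus-erasure observation, which is entirely standard; the erasure-recovery procedure itself is immediate from the disjointness built into the definition of availability. In particular, the symmetric-recovery-set hypothesis standing throughout this section is not actually invoked by the argument.
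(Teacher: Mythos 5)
Your proof is correct, but it takes a genuinely different route from the paper's. You argue via erasures: availability $t\ge 1$ lets the code fill in any single erasure, availability $t\ge 2$ lets it fill in any two erasures (since the two disjoint recovery sets for an erased position cannot both contain the other erased position), and then the standard fact that correcting every pattern of $e$ erasures forces $d\ge e+1$ gives the bounds. The paper instead argues via errors: for $t=1$ it shows the code can \emph{detect} one error (re-derive each symbol from its recovery set and compare), giving $d\ge 2$; for $t=2$ it shows the code can \emph{locate and correct} one error, because a single error at position $i$ causes both recovery sets $A_{i1},A_{i2}$ to disagree with the received symbol, and $B_{i1}\cap B_{i2}=\{i\}$ pins down the error position, giving $d\ge 3$. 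The two arguments yield the same bounds; yours is shorter and leans on a classical erasure-distance equivalence (and essentially reuses the $b(2)=2$ reasoning from Proposition \ref{neededlocality}), while the paper's has the side benefit of exhibiting an explicit single-error detection/correction procedure built from the local recovery maps. Your closing observation is also right: neither argument actually uses the symmetry hypothesis, only the disjointness of the recovery sets for a fixed position.
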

\begin{proof}
Suppose that $t=1$.  Assume a codeword $\mathbf{c}$ has one error, in position $i$, resulting in the word $\mathbf{c}^{\prime}$.  Consider an experiment in which a single position is intentionally erased and is locally recovered using the recovery set for the position.  If there were no errors, any position could be erased and correctly recovered from its recovery set.  However, if $c_i$ is erased, the recovered word will be different than $\mathbf{c}^{\prime}$, meaning that there must be an error in $B_{i1}$.  Checking all positions using this procedure, one may detect any single error in $\mathbf{c}$, so the minimum distance of $C$ is at least 2.

Suppose that $t=2$.  Assume a codeword $\mathbf{c}$ has one error, in position $i$, resulting in the word $\mathbf{c}^{\prime}$.  Consider an experiment in which a single position is intentionally erased, and is then locally recovered from each of its two disjoint recovery sets.  Again, if there were no errors, any position could be erased and correctly recovered  from either of its two recovery sets.  However, if $c_i$ is erased, the words recovered using $A_{i1}$ and $A_{i2}$ will  \textit{both} be different from $\mathbf{c}^{\prime}$, meaning that there must be an error in $B_{i1}$ and in $B_{i2}$.  Since there is only one error in $\mathbf{c}^{\prime}$, the single error must be at the intersection of $B_{i1}$ and $B_{i2}$, namely at position $i$.  Once the position of the error is known, it may be erased and correctly recovered using either of the two recovery sets.   Moreover, none of the other positions will register errors from both of their recovery sets, so position $i$ can be uniquely identified.

\end{proof}

\section{LRC$(2)$s on generalized Giulietti-Korchmaros curves} \label{GK_section}

Let $q=p^h$ for $p$ a prime, and let $N\geq 3$ be an odd natural number. We consider the family of generalized Giulietti-Korchmaros (GK) curves $\mathcal{C}_N$, which are maximal over the field $\mathbb{F}_{q^{2N}}$ \cite{Garcia, GMP}.  The curve $\mathcal{C}_N$ is the normalization of the intersection of two surfaces $\mathcal{H}_q$ and $\mathcal{X}_N$ in $\mathbb{P}^3$, defined by the following affine equations: 
\[ \mathcal{H}_q: x^q+x=y^{q+1} \] 
\[ \mathcal{X}_N: y^{q^2} - y =z^{\frac{q^N+1}{q+1}}. \]  
The intersection of these surfaces has a single point at infinity, denoted by $\infty$, which is a cusp singularity for $N>3$ and is smooth when $N=3$.  The curve is smooth elsewhere. 
The number of $\mathbb{F}_{q^{2N}}$-rational points on $\mathcal{C}_N$ is 
\[ \#\mathcal{C}_N(\mathbb{F}_{q^{2N}} )= q^{2N+2} - q^{N+3} + q^{N+2} + 1. \]

The curve $\mathcal{C}_N$ can also be defined as the normalized fiber product over $\mathbb{P}^1$ of the two curves in $\mathbb{P}^2$ given by the same equations.  As curves, both $\mathcal{H}_q$ and $\mathcal{X}_N$ are maximal over the field $\mathbb{F}_{q^{2N}}$ \cite{Abdon}, each with a single point at infinity, denoted by $\infty_{\mathcal{H}_q}$ and $\infty_{\mathcal{X}_N}$ respectively.  Let $\infty_{y}$ denote the single point at infinity on $\mathbb{P}^1_y$.  Define $h_1:\mathcal{X}_N\rightarrow \mathbb{P}^1_y$ to be the natural degree $\frac{q^N+1}{q+1}$ projection map onto the $y$ coordinate for affine points, with $\infty_{\mathcal{X}_N}\mapsto \infty_{y}$.  Similarly, let $h_2:\mathcal{H}_q\rightarrow \mathbb{P}^1_y$ be the natural degree $q$ projection map onto the $y$ coordinate for affine points, with $\infty_{\mathcal{H}_q}\mapsto \infty_{y}$. We then have the fiber product construction depicted in Figure \ref{GK_fig}.

\begin{figure} 
\begin{tikzpicture}[node distance = 2cm, auto]
  \node (Y) {$\mathbb{P}^1_y$};
  \node (Y1) [node distance=1.4cm, left of=Y, above of=Y] {$\mathcal{X}_N$};
   \node (Y2) [node distance=1.4cm, right of=Y, above of=Y] {$\mathcal{H}_q$};
   \node (X) [node distance=1.4cm, left of=Y2, above of=Y2] {$\mathcal{X}_N\times\mathcal{H}_q$};
  \draw[->] (Y1) to node [swap] {$h_1$} (Y);
  \draw[->] (Y2) to node {$h_2$} (Y);
  \draw[->] (X) to node [swap] {$g_1$} (Y1);
  \draw[->] (X) to node {$g_2$} (Y2);
  \draw[->] (X) to node {$g$} (Y);
 \end{tikzpicture}
   \caption{Generalized GK curve as a fiber product.}
    \label{GK_fig}
\end{figure}
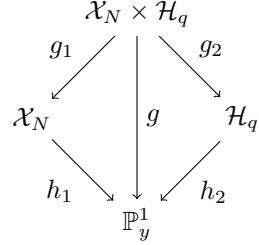

Then $\mathcal{C}_N=\widetilde{\mathcal{X}_N\times\mathcal{H}_q}$, the normalization of the fiber product described above.  The map $g:\mathcal{C}_N\rightarrow \mathbb{P}^1_y$ has degree $d_g:=\frac{q(q^N+1)}{q+1}$ and is ramified above $\infty_y$ and $a_y$ with $a \in \F_{q^2}$, where $a_y$ denotes the point on $\mathbb{P}^1_y$ with $y=a$  \cite{Garcia, GMP}. 
Notice that $d_{g_1}=d_{h_2}=q$ and $d_{g_2}=d_{h_1}=\frac{q^N+1}{q+1}$.
 
To construct an LRC($2$), we use the commutative diagram in Figure \ref{GK_fig} and the construction detailed in Section \ref{construction_subsection}. The degree of the function $x:\mathcal{H}_q\rightarrow\mathbb{P}^1$ is $d_x=q+1$.  The degree of the function $z:\mathcal{X}_N\rightarrow \mathbb{P}^1$ is $d_z=q^2$.  We take the divisor $Q$ to be $\infty_y$ and choose a parameter $l$ so that $D=l\infty_y$. 

\begin{theorem} The locally recoverable code $C(l \infty_y, B)$ constructed from the generalized GK curve $C_N$ as described above is an $[n,k,d]$ code over $\F_{q^{2N}}$ with availability $2$ and locality $\left(q-1,\frac{q^N+1}{q+1}-1 \right)$ where
\begin{align*}
n & = q^{2N+2} - q^{N+3} + q^{N+2} - q^3, \\
k &=\left(\frac{q^N+1}{q+1}-1\right)(q-1)(l+1), \\
d & \geq n - l\left(\frac{q(q^N+1)}{q+1}\right) - \left((q^N+1)\left(q-2\right)  +q^3\left(\frac{q^N+1}{q+1}-2\right)\right), \\
\end{align*}  
and $l$ is any positive integer yielding $0<k<n$,  $l<q^{N+2}+q^{N+1}-q-1$, and $d>0$.
\end{theorem}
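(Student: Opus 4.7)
The plan is to apply Theorem~\ref{construction} directly to the fiber product setup of Figure~\ref{GK_fig}, taking $\mathcal{Y}_1 = \mathcal{X}_N$, $\mathcal{Y}_2 = \mathcal{H}_q$, $\mathcal{Y} = \mathbb{P}^1_y$, with primitive elements $x_1 = z$ for $\F_{q^{2N}}(\mathcal{X}_N)/h_1^*(\F_{q^{2N}}(\mathbb{P}^1_y))$ and $x_2 = x$ for $\F_{q^{2N}}(\mathcal{H}_q)/h_2^*(\F_{q^{2N}}(\mathbb{P}^1_y))$. From the defining equations I read off $d_{h_1} = \frac{q^N+1}{q+1}$, $d_{h_2} = q$, $d_{x_1} = q^2$, $d_{x_2} = q+1$, $d_{g_1} = q$, and $d_{g_2} = \frac{q^N+1}{q+1}$, together with $d_g = d_{h_1}d_{h_2} = \frac{q(q^N+1)}{q+1}$.

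Next I would choose $S$ to consist of every $\F_{q^{2N}}$-rational point of $\mathbb{P}^1_y$ outside the ramification locus of $g$, which by \cite{Garcia, GMP} equals $\{\infty_y\} \cup \{a_y : a \in \F_{q^2}\}$. Because $z$ and $x$ have their sole poles at the respective points at infinity (both lying above $\infty_y$), the divisor $\tilde{D} = h_1(D_{1,-}) + h_2(D_{2,-})$ is supported at $\infty_y$, and with $D = l\infty_y$ the three avoidance conditions of Theorem~\ref{construction} hold automatically; the stated bound on $l$ forces $l < |S| = q^{2N} - q^2$. The length is then $n = d_g|S|$, or equivalently $|\mathcal{C}_N(\F_{q^{2N}})|$ minus the rational points of $\mathcal{C}_N$ lying above the excluded locus: the single point at infinity above $\infty_y$, and $q$ points above each $a_y$ with $a \in \F_{q^2}$, since $h_1$ is totally ramified there (the equation $y^{q^2}-y = z^{(q^N+1)/(q+1)}$ forces $z=0$) while $h_2$ contributes $q$ rational $x$-values satisfying $x^q + x = a^{q+1} \in \F_q$. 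Subtracting $1 + q^3$ from the known count $q^{2N+2} - q^{N+3} + q^{N+2} + 1$ yields the stated $n$.

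The dimension follows from Riemann-Roch on $\mathbb{P}^1$: since the genus is $0$ and $\deg(l\infty_y) = l \geq 0$, we have $\ell(l\infty_y) = l + 1$, and Theorem~\ref{construction} gives $k = (l+1)(d_{h_1}-1)(d_{h_2}-1)$ as claimed. The minimum distance bound is obtained by direct substitution into $d \geq n - ld_g - (d_{h_1}-2)d_{g_1}d_{x_1} - (d_{h_2}-2)d_{g_2}d_{x_2}$, using $d_{g_1}d_{x_1} = q^3$ and $d_{g_2}d_{x_2} = q^N+1$. The constraints on $l$ are exactly those needed to make $0 < k < n$ and the distance bound positive; solving the latter inequality for $l$ after clearing denominators produces the cutoff $l < q^{N+2} + q^{N+1} - q - 1$. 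The main obstacle I anticipate is pinning down the ramification of $g$ over the points $a_y$ with $a \in \F_{q^2}$ cleanly enough to justify the fiber count of $q$ rational points of $\mathcal{C}_N$ above each such $a_y$; once this is in hand, the remaining work is routine bookkeeping using Theorem~\ref{construction}.
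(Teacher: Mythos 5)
Your overall strategy matches the paper's: both apply Theorem \ref{construction} to the fiber product of Figure \ref{GK_fig} with $d_{h_1}=\frac{q^N+1}{q+1}$, $d_{h_2}=q$, $d_{x_1}=q^2$, $d_{x_2}=q+1$, and both compute $n$ by deleting the $q^3+1$ rational points of $\mathcal{C}_N$ lying over $\{\infty_y\}\cup\{a_y: a\in\F_{q^2}\}$. However, there is a genuine error in your identification of $S$. Theorem \ref{construction} requires $|g^{-1}(P)\cap\mathcal{C}_N(\F_{q^{2N}})|=d_g$ for every $P\in S$, i.e., $S$ must be the completely splitting locus, not merely the complement of the ramification locus. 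These are very different sets here: the unramified rational points of $\mathbb{P}^1_y$ number $q^{2N}-q^2$, whereas $|S|=n/d_g=q^2(q^{N-1}-1)(q+1)$, which is far smaller (for $q=N=3$: $288$ versus $720$); most unramified fibers contain no $\F_{q^{2N}}$-rational point of $\mathcal{C}_N$ at all. Your two computations of $n$ are therefore mutually inconsistent ($d_g\cdot(q^{2N}-q^2)$ is not the stated $n$), and the hypothesis of Theorem \ref{construction} fails on your $S$. The real point to verify --- which you do not, while flagging the ``main obstacle'' in the wrong place (your fiber count over $a_y$ is actually fine) --- is that every rational point of $\mathcal{C}_N$ with $y\notin\F_{q^2}\cup\{\infty\}$ sits in a completely splitting fiber: the $q$ Artin--Schreier translates $x+\alpha$ with $\alpha^q+\alpha=0$ and the $\frac{q^N+1}{q+1}$ Kummer twists $\zeta z$ are all rational over $\F_{q^{2N}}$, so $B$ consists exactly of those $n$ points and $|S|=n/d_g$.

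Second, your claim that the cutoff $l<q^{N+2}+q^{N+1}-q-1$ is ``produced by solving the distance inequality'' is not correct: clearing denominators in $d>0$ yields a bound on $l$ of the form $q^2(q^{N-1}-1)(q+1)$ minus lower-order corrections, which is not the stated expression. In the paper that cutoff is the separate requirement $l<|S|$ (needed so that the evaluation map is injective and $k=\ell(D)(d_{h_1}-1)(d_{h_2}-1)$ holds exactly), while $d>0$ is listed as an independent condition on $l$. The rest of your bookkeeping --- the dimension via Riemann--Roch on $\mathbb{P}^1$ and the substitutions $(d_{h_1}-2)d_{g_1}d_{x_1}=q^3\left(\frac{q^N+1}{q+1}-2\right)$ and $(d_{h_2}-2)d_{g_2}d_{x_2}=(q-2)(q^N+1)$ --- is correct and agrees with the paper's proof.
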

 
 \begin{proof}
The set $B$ of evaluation points consists of points on the curve $\mathcal{C}_N$ that are not above ramification points in the ground curve, $\mathbb{P}^1_y$, over the field $\mathbb{F}_{q^{2N}}$.  This means we let $S=g(B)$. Since \[|g^{-1}(\{\infty_y, a_y: a \in \F_{q^2} \})|=q^3+1,\] we calculate the size of the evaluation set $B$: 
\begin{align*}
 |B| = | \mathcal{C}_N(\F_{q^{2N}})| - (q^3+1) &=   q^{2N+2} - q^{N+3} + q^{N+2} + 1 - (q^3 + 1)\\
  &= q^{2N+2} - q^{N+3} + q^{N+2} - q^3=sd_g, \end{align*} where $s=|S|=q^2(q^{N-1}-1)(q+1)=q^{N+2}+q^{N+1}-q-1$.

Let $D=l\infty_{y}$.  Since the genus of $\mathcal{Y}=\mathbb{P}^1$ is 0, we know by the Riemann-Roch Theorem that $\ell(D)=l+1$, and we can realize these functions as polynomials in $y$ of degree bounded by $l$.  The set of evaluation functions for the code is denoted by $V$, where 
\[ V= \text{Span}\left\{ x^i z^j y^{\kappa}: 0\leq i \leq \frac{q^N+1}{q+1}-2, 0\leq j\leq q-2, 0\leq \kappa \leq l, \textrm{ and }i,j,\kappa \in\mathbb{Z}\right\}. \]  
Then, by Theorem \ref{construction}, we obtain a code with the claimed attributes.
\end{proof}

\begin{remark}
As discussed in \cite{GMP}, the curves $\mathcal{X}_N$ and $\mathcal{H}_q$ are quotient curves of $\mathcal{C}_N$, and in this case the automorphism group construction in Corollary \ref{aut_gp_construction} could be applied, yielding the same codes.  
\end{remark}

Given the choice of $l$, we can construct codes with rate close to 
one, where the tradeoff is low minimum distance:

\[ R_{N,l}\geq \frac{\left(\frac{q^N+1}{q+1}-1\right)(q-1)(l+1)}{|B|}, \]
where for maximal $l$ and increasing $N$ we have
\[\lim_{N\rightarrow \infty}R_{N,l}=1.\]

\begin{example} 
Taking $N=3$, we  consider codes from the curves $\mathcal{C}_3$ over the field $\mathbb{F}_{q^6}$. We obtain blocklength 
$|B| = q^3(q-1)(q^2-q+1)(q+1)^2=q^8-q^6+q^5-q^3$, as above, and the following bounds on the dimension and  minimum distance: 
\begin{align*}
k&= (q^2-q)(q-1)(l+1), \\
d&\geq n-lq(q^2-q+1)-(q^3+1)(q-2)-q^3(q^2-q-1). 
\end{align*}

In Table~\ref{table:gk3}, we consider $q=3$ and provide bounds on the code parameters for different values of $l$. 

\begin{table}[h!] 
\centering
\begin{tabular}{|l|l|l|l|}
\hline
$l$ &  $k$ & $d\geq$ \\ \hline
270 &       3252    &    215  \\ \hline
260 &     	3132   &   425 \\ \hline
250 &          3012 &   635  \\ \hline
240 &         2892  &    845 \\ \hline
230 &        2772   &   1055  \\ \hline
220 &         2652  &   1265  \\ \hline
210 &         2532  &    1475 \\ \hline
\end{tabular}
\caption{The generalized GK curves $\mathcal{C}_3$ over $\mathbb{F}_{729}$ produce LRC(2)s 
 of length $n=6048$, with $N=3$, $q=3$, $r_1=6$, $r_2 = 2$, and $D=l\infty_y$, with $l$ determining $k$ and $d$ as above. }
 \label{table:gk3}
\end{table}

\end{example} 

In this section, we employed generalized GK curves to obtain LRC(2)s over $\F_{q^{2N}}$ with recovery sets of sizes $q-1$ and $\frac{q^N+1}{q+1}-1$. While this addresses the availability problem, it leads to recovery sets of very different sizes if $N>2$. In the next section, we construct LRC(2)s with recovery sets which are more balanced in size.

\section{LRC($2$)s on Suzuki curves} \label{Suzuki_section}

Let $a\in\mathbb{N}$, $q_0=2^a$, and $q=2q_0^2$.  The Suzuki group $Sz(q)$ can be realized as a subgroup of $GL_4(q)$ as follows.  Let $a,c,d\in\mathbb{F}_q$, $d\neq 0$, and define \[T_{a,c}=\left(\begin{array}{cccc}1 & 0 & 0 & 0 \\a & 1 & 0 & 0 \\c & a^{2q_0} & 1 & 0 \\a^{2q_0+2}+ac+c^{2q_0} & a^{2q_0+1}+c & a & 1\end{array}\right),\] \[M_d=\left(\begin{array}{cccc}d^{-q_0-1} & 0 & 0 & 0 \\0 & d^{-q_0} & 0 & 0 \\0 & 0 & d^{q_0} & 0 \\0 & 0 & 0 & d^{q_0+1}\end{array}\right),\] and \[W=\left(\begin{array}{cccc}0 & 0 & 0 & 1 \\0 & 0 & 1 & 0 \\0 & 1 & 0 & 0 \\1 & 0 & 0 & 0\end{array}\right).\]  Let $T=\{T_{a,c}:a,c\in\mathbb{F}_q\}$ and $M=\{M_d:d\in\mathbb{F}_q^{*}\}$.  Then $Sz(q)=\langle M, T, W\rangle$.  

The Suzuki curve $\mathcal{S}_q$ is the Deligne-Lusztig curve with automorphism group $Sz(q)$.  The curve $\mathcal{S}_q$ has a singlular model in $\mathbb{P}^2$ with affine equation  
\[y^q+y=x^{q_0}(x^q+x).\]
The genus of $\mathcal{S}_q$ is $q_0(q-1)$ and it has $q^2+1$ points over $\mathbb{F}_q$, making it optimal over the field.  Over $\mathbb{F}_{q^4}$, $\mathcal{S}_q$ is maximal, attaining the upper Weil bound \cite{Hansen}.  Smooth models for $\mathcal{S}_q$ in higher-dimensional spaces have been determined \cite{Ballico, Duursma}.  As in \cite{GKT}, a convenient model in $\mathbb{P}^4$ can be defined by the affine equations
\[y^q+y=x^{q_0}(x^q+x),\]
\[z = x^{2q_0+1}+y^{2q_0},\]and 
\[w=xy^{2q_0}+z^{2q+1}.\]  Let $[U:X:Y:Z:W]$ be a set of projective coordinates for $\mathbb{P}^4$, where for all $U\neq 0$ we have affine coordinates given by
\[x=\frac{X}{U}, \hspace{.25in} y=\frac{Y}{U}, \hspace{.25in} z=\frac{Z}{U}, \hspace{.25in} w=\frac{W}{U}.\]
For any $(x,y)\in\mathbb{F}_q^2$ satisfying $y^q+y=x^{q_0}(x^q+x)$, let $P_{(x,y)}$ denote the point $[1:x:y:z:w]\in\mathcal{S}_q(\mathbb{F}_q)$.  Let $P_{\infty}=[0:0:0:0:1]\in\mathcal{S}_q(\mathbb{F}_q)$.  

\begin{theorem}
There are locally recoverable codes $C(D,B)$ with availability $2$, and locality $(q-1,q-2)$ on the Suzuki curve $\mathcal{S}_q$

 \begin{enumerate}
 \item over $\mathbb{F}_q$, with length $n=q(q-1)$ and  dimension $k=(q-1)(q-2)$; and 
 \item over $\mathbb{F}_{q^4}$, with length $n=q(q-1)(q^2+2qq_0+q+1)$ and dimension $k=(q-1)(q-2)(q^2+2qq_0+q+1)$.  \end{enumerate}

\end{theorem}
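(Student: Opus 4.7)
The plan is to invoke Corollary~\ref{aut_gp_construction} with two subgroups of $Sz(q) \subseteq Aut(\mathcal{S}_q)$: let $T_1 := Z(T) = \{T_{0,c} : c \in \F_q\}$, the center of the Sylow $2$-subgroup $T$ (elementary abelian of order $q$), and let $T_2 := M$, cyclic of order $q-1$. Since $M$ normalizes $T$, and hence its characteristic subgroup $Z(T)$, the product $T_1 T_2$ is an internal semi-direct product $T_1 \rtimes T_2 \leq Sz(q)$ of order $q(q-1)$; associativity is automatic with only two factors, so the hypotheses of Corollary~\ref{aut_gp_construction} are satisfied. The resulting map degrees are $d_{g_1} = |T_1| = q$, $d_{g_2} = |T_2| = q-1$, $d_{h_1} = q-1$, and $d_{h_2} = q$, yielding availability $2$ with locality $(d_{h_1}-1, d_{h_2}-1) = (q-2, q-1)$ as claimed (up to reordering).

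Next, I would identify the base curve $\mathcal{Y} = \mathcal{S}_q/(T_1 \rtimes T_2)$ explicitly enough to apply Riemann--Roch. Since $T_{0,c}$ acts on the affine model by $(x,y) \mapsto (x, y+c)$ (with the induced action on $z,w$), the function $x$ is $T_1$-invariant of degree $q$, so $\mathcal{Y}_1 = \mathcal{S}_q/T_1 \cong \mathbb{P}^1_x$. The diagonal $M_d$ acts on $\mathbb{P}^1_x$ by $x \mapsto dx$, so $u := x^{q-1}$ is $M$-invariant and $\mathcal{Y} = \mathcal{Y}_1/T_2 \cong \mathbb{P}^1_u$ has genus $0$.

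For the orbit count, the group $T$ is known to act regularly on the $q^2$ affine $\F_q$-rational points of $\mathcal{S}_q$, so $T_1$ decomposes them into $q$ orbits of size $q$. The orbit $\{(0,c) : c \in \F_q\}$ is stable under $M$ (since $M_d(0,c) = (0, d^{q_0+1} c)$), while the remaining $q-1$ orbits are freely permuted by $M$. Over $\F_q$ the short $T_1 \rtimes T_2$-orbits are therefore $\{P_\infty\}$ and $\{(0,c):c\in\F_q\}$, totalling $q+1$ points, and removing them leaves $|B|=q(q-1)$ points on a single regular orbit, so $|S|=1$. Taking $D=0$ on $\mathcal{Y}$ gives $\ell(D)=1$, and Theorem~\ref{construction} produces a code of length $n=q(q-1)$ and dimension $(q-1)(q-2)$, settling part (1). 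For part (2), an $M_d$-fixed affine point with $d\neq 1$ must have $x=0$ (by the eigenvalue equation), and $x=0$ forces $y^q+y=0$, i.e., $y\in\F_q$; so no new short orbits appear over $\F_{q^4}$. Hence $|B|=|\mathcal{S}_q(\F_{q^4})| - (q+1) = q(q-1)(q^2+2qq_0+q+1)$ and $|S|=q^2+2qq_0+q+1$. Choose $D$ on $\mathcal{Y}\cong\mathbb{P}^1_u$ of degree $l=|S|-1$ with support disjoint from $S$ and $\tilde{D}$; since $g_{\mathcal{Y}}=0$, Riemann--Roch gives $\ell(D)=l+1=|S|$, and Theorem~\ref{construction} delivers dimension $(q-1)(q-2)(q^2+2qq_0+q+1)$.

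The main obstacle, as foreshadowed in Remark~\ref{group bad remark}, is not the group-theoretic setup but the explicit orbit accounting on $\mathcal{S}_q(\F_{q^4})$ and the identification of $\mathcal{Y}$ as a rational curve so that a divisor of the needed Riemann--Roch dimension exists. I would bypass writing down $\mathcal{Y}_2 = \mathcal{S}_q/M$ (which does not enter the final parameters) and instead work entirely with the matrix action of $T_1$ and $T_2$ on the given model of $\mathcal{S}_q$, using the characteristic-$2$ splitting of $y^q + y$ to rule out extra fixed points of $M$ over $\F_{q^4}$.
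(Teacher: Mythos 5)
Your proposal is correct and follows essentially the same route as the paper: the same subgroups $T_0=\{T_{0,c}\}$ and $M$ of $Sz(q)$, the same semidirect product $G=T_0\rtimes M$, removal of the same $q+1$ points lying in the short orbits, and the same divisor degrees ($l=0$ over $\F_q$, and $l=|S|-1$ on the genus-zero quotient over $\F_{q^4}$) fed into Theorem \ref{construction}. The only difference is presentational: you verify the orbit and ramification structure by direct computation with the affine action of $T_{0,c}$ and $M_d$, whereas the paper cites Giulietti--Korchm\'aros--Torres \cite{GKT} for the quotient curves and their ramification.
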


\begin{proof}
Let $T_0=\{T_{0,c}:c\in\mathbb{F}_q\}\leq T$. It is straightforward to compute that $M_dT_{0,c}M_d^{-1}=T_{0,cd^{2q_0+1}}$, so $M$ normalizes $T_0$.  Thus, $T_0$ is a normal subgroup of $G=\langle M,T_0 \rangle$, and $G=T_0 \rtimes M$.  Thus, $G=(T_0 \rtimes M)\leq Sz(q)$.

Let $\mathcal{Y}_1=\mathcal{S}_q/T_0$.  The accompanying natural map $g_1:\mathcal{S}_q\rightarrow \mathcal{Y}_1$ is degree $q$ and fully ramified at  a single point $P_{\infty}\in\mathcal{S}_q(\mathbb{F}_q)$ \cite[Theorem 6.1]{GKT}.  Then, by \cite[Theorem 6.6]{GKT}, $\mathcal{Y}_1$ has genus $0$ and has an affine model as given in \cite{GKT}.

Let $\mathcal{Y}_2=\mathcal{S}_q/M$.  By \cite[Theorem 4.1]{GKT}, the natural map $g_2:\mathcal{S}_q\rightarrow \mathcal{Y}_2$ is degree $q-1$ and fully ramified below two points, $P_{(1,0)}$ and $P_{\infty}\in\mathcal{S}_2(\mathbb{F}_q)$.  Then $\mathcal{Y}_2$ has genus $q_0$ and has an affine model as given in \cite{GKT}.

Let $\mathcal{Y}=\mathcal{S}_q/G$.  Since $\mathcal{Y}_1$ covers $\mathcal{Y}$, it must be that $\mathcal{Y}$ has genus $0$ as well.  Let $g:\mathcal{S}_q\rightarrow \mathcal{Y}$ be the accompanying natural map.  We then have the diagram of curves as shown in Figure \ref{Suzuki_fig}.

\begin{figure}  
\begin{tikzpicture}[node distance = 2cm, auto]
  \node (Y) {$\mathcal{Y}$};
  \node (Y1) [node distance=1.4cm, left of=Y, above of=Y] {$\mathcal{Y}_1$};
   \node (Y2) [node distance=1.4cm, right of=Y, above of=Y] {$\mathcal{Y}_2$};
   \node (X) [node distance=1.4cm, left of=Y2, above of=Y2] {$\mathcal{S}_q$};
  \draw[->] (Y1) to node [swap] {$h_1$} (Y);
  \draw[->] (Y2) to node {$h_2$} (Y);
  \draw[->] (X) to node [swap] {$g_1$} (Y1);
  \draw[->] (X) to node {$g_2$} (Y2);
  \draw[->] (X) to node {$g$} (Y);
 \end{tikzpicture}
   \caption{Suzuki curve and its quotients used for constructing LRC(2) with balanced recovery sets}
\label{Suzuki_fig}
\end{figure}
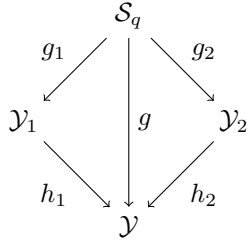

In Case (1), let $B=\mathcal{S}_q(\mathbb{F}_q)\setminus\{P_{\infty},g^{-1}\left(g(P_{(1,0)})\right)\}$.  Then $$n=|B|=|\mathcal{S}_q(\mathbb{F}_q)|-(q+1)=q(q-1).$$In the construction from Theorem \ref{construction}, we may choose $l=0$, so $\mathcal{L}(D)$ is simply the set of constant functions on $\mathcal{Y}$, giving $\ell(D)=1$ and yielding the stated dimension. 

In Case (2), let $B=\mathcal{S}_q(\mathbb{F}_{q^4})\setminus\{P_{\infty},g^{-1}\left(g(P_{(1,0)})\right)\}$.  Then $$n=|B|=|\mathcal{S}_q(\mathbb{F}_{q^4})|-(q+1)=q(q-1)(q^2+2qq_0+q+1).$$Since $\mathcal{Y}$ has genus $0$, we may let $l=q^2+2qq_0+q$, $D=lP_{\infty}$, and assume that functions in $\mathcal{L}(D)$ are represented by polynomials of degree less than or equal to $l$.  We then have $\ell(D)=q^2+2qq_0+q+1$, so $k=(q-1)(q-2)(q^2+2qq_0+q+1)$ by Theorem \ref{construction}.

\end{proof}

\begin{remark}
 In both Case (1) and Case (2), the bound on the minimum distance from Theorem \ref{construction} applies. However, without explicit generators for the given function field extensions, one cannot determine the degrees of the functions $x_i$ to give an explicit bound on $d$. Hence, some work around is required. Therefore we use the very modest bound from Proposition \ref{distance} to guarantee $d \geq 3$ in both cases.   
 
This illustrates a drawback of the `top-down' construction presented in Corollary \ref{aut_gp_construction} in that while the codes $C(D,B)$ might have reasonable classical parameters $n$, $k$, and $d$, one might not have access to the information needed to provide a good estimate of the minimum distance $d$. 
\end{remark}

\begin{remark} There is no benefit to considering the code defined over $\mathbb{F}_{q^2}$ because $|\mathcal{S}_m(\mathbb{F}_{q^2})|=|\mathcal{S}_m(\mathbb{F}_{q})|$.  The field $\mathbb{F}_{q^4}$ is a good choice for increasing length and dimension of the code because, as mentioned above, $\mathcal{S}_m$ is maximal over this field.
\end{remark}

In this section, we used Suzuki curves to determine LRC(2)s with recovery sets which are more balanced in size than those constructed in Section \ref{GK_section}. However, the quotient curve construction does not naturally provide explicit expressions for the bases of functions. In this case, equations and explicit realizations of their function fields are known for the quotient curves \cite{GKT}. However, the necessary functions $x_1$ and $x_2$ that generate the function fields of the quotient curves are unknown, even for this very well-studied curve.  This is a larger issue with the quotient curve construction--knowledge of the top curve and the existence, genera, and even models of the quotient curves does not give full information about the functions that generate the associated function field extensions, and their degrees as maps to $\mathbb{P}^1$.  Hence, even given a curve $\mathcal{X}$ with many points and a large automorphism group, it can be difficult to generate useful codes from the quotients, and, as mentioned in Section \ref{construction_subsection}, extending this construction to more than two subgroups of Aut$(\mathcal{X}$) faces additional obstacles.  This motivates us to seek good general examples for LRC($t$)s from fiber product constructions.  In the next section, we consider a general fiber product construction that is both explicit and gives rise to balanced recovery sets. Moreover, it naturally leads to even more recovery sets for each position.

\section{LRCs with multiple recovery sets from fiber products of Artin-Schreier curves} \label{LRCt_section}

In \cite{vanderGeer1996}, van der Geer and van der Vlugt develop several constructions of fiber products of Artin-Schreier curves with many points.  In particular, they construct maximal curves via the fiber product of several smaller genus maximal curves, in both characteristic 2 and odd characteristic.  These constructions are a very natural source of curves for locally recoverable codes with many recovery sets.

The simplest of these constructions is given in \cite[Section 3, Method I]{vanderGeer1996}. Let $p$ be prime, $2h$ an even natural number, and $q=p^{2h}$; \cite{vanderGeer1996} also gives a similar construction odd powers of $p$.  Let $A=\{a\in\mathbb{F}_q :a^{p^{h}}+a=0\}$.  As the kernel of the $\mathbb{F}_p$-linear trace map $\mathbb{F}_q/\mathbb{F}_{\sqrt{q}}$,  $A$ is an $h$-dimensional $\mathbb{F}_p$ vector space.  Let $\{a_1,a_2, \dots, a_{h}\}$ generate $A$ over $\mathbb{F}_p$.  Then the curves \[\mathcal{Y}_{a_i}: y^p-y=a_ix^{p^{h}+1}\] each have genus $\frac{(p-1)\sqrt{q}}{2}$ and have $pq+1$ points over $\mathbb{F}_q$, with one point $\infty_{\mathcal{Y}_{a_i}}$ at infinity.  

Let $t$ be an integer with $1\leq t \leq h$.  Then consider the natural map $h_i: \mathcal{Y}_{a_i}\rightarrow \mathbb{P}^1_x$ given by projection onto the $x$ coordinate, where $\infty_x$ represents the point at infinity on the projective line $\mathbb{P}^1_x$ and $\infty_{\mathcal{Y}_{a_i}}\mapsto \infty_x$.  These are all degree-$p$ Artin-Schreier covers of $\mathbb{P}^1_x$, fully ramified above $\infty_x$.  

Define $\mathcal{X}$ to be the fiber product of these curves $\mathcal{Y}_{a_i}$ over $\mathbb{P}^1_x$; i.e.,
\[\mathcal{X}=\mathcal{Y}_{a_1}\times_{\mathbb{P}_x^1} \mathcal{Y}_{a_2}\times_{\mathbb{P}_x^1} \dots \times_{\mathbb{P}_x^1} \mathcal{Y}_{a_t}.\] The corresponding maps $g_i: \mathcal{X}\rightarrow \mathcal{Y}_{a_i}$ are degree $p^{t-1}$, ramified only above $\infty_{\mathcal{Y}_{a_i}}$.  Let $\infty_{\mathcal{X}}$ be the single point above $\infty_{\mathcal{Y}_{a_i}}$ on $\mathcal{X}$.

\begin{figure} 
\begin{tikzpicture}[node distance = 2cm, auto] 
  \node (Y) {$\mathbb{P}_x^1$};
  \node (Y1) [node distance=1.4cm, left of=Y, above of=Y] {$\mathcal{Y}_{a_2}$};
    \node (Ys) [node distance=1.4cm, right of=Y, above of=Y] {$\mathcal{Y}_{a_t}$};
   \node (X) [node distance=1.4cm, left of=Ys, above of=Ys] {$\mathcal{X}$};
 \node (Y2) [node distance=1.4cm, left of=Y1, left of=Ys, below of=X] {$\mathcal{Y}_{a_1}$};
  \draw[->] (Y1) to node {$h_2$} (Y);
   \draw[->] (Y2) to node [swap] {$h_1$} (Y);
  \draw[->] (Ys) to node {$h_t$} (Y);
   \draw[->] (X) to node [swap] {$g_1$} (Y2);
  \draw[->] (X) to node {$g_2$} (Y1);
  \draw[->] (X) to node {$g_t$} (Ys);
  \path (Y1) -- node[auto=false]{\ \ \ \ \ \ \ldots } (Ys);
\end{tikzpicture}
  \caption{Curves for locally recoverable codes with availability $t$}
  \label{LRCt_fig}
  \end{figure}
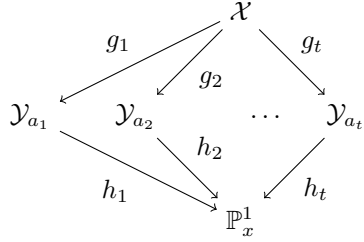

As shown in \cite[Theorem 3.1]{vanderGeer1996}, the curve $\mathcal{X}$ has genus $\frac{(p^t-1)\sqrt{q}}{2}$ and $|\mathcal{X}(\mathbb{F}_q)|=p^tq+1$, making $\mathcal{X}$ maximal over $\mathbb{F}_q$.  
Explicitly, we can write $$B=\{(x,y_1,y_2,\dots, y_t)\in\mathbb{F}_q^{t+1}: y_i^p+y_i=a_ix^{\sqrt{q}+1}\}.$$  The functions $g_i:\mathcal{X}\rightarrow \mathcal{Y}_{a_i}$ are given by $$g_i(x,y_1,y_2,\dots, y_t)=(x,y_i)$$ and the functions $\tilde{g}_i:\mathcal{X}\rightarrow \tilde{\mathcal{Y}}_{a_i}$ are given by $$\tilde{g}_i(x,y_1,y_2,\dots, y_t)=(x,y_1,y_2,\dots, y_{i-1},y_{i+1},\dots, y_t).$$  For each $i$, the map $g_i$ has degree $p^{t-1}$. The function $y_i$ has degree $d_{y_i}=p^h+1$, since for each $\alpha, \beta\in\mathbb{F}_q$ with $\beta\neq 0$ and $\alpha^p+\alpha = a_i\beta^{p^h+1}$, there are $p^h+1$ points $Q_j=(\zeta^k\beta,\alpha)\in\mathcal{Y}_{a_i}(\mathbb{F}_q)$, where $\zeta^{p^h+1}=1$ and $1\leq k\leq p^h+1$.

In applying the code construction, we use the divisor $D=l\infty_x$, meaning $\mathcal{L}(D)$ is the set of polynomials in $x$ over $\mathbb{F}_q$ of degree $\leq l$.  Then functions leading to codewords are 
\[V=\textrm{Span}\{x^jy_1^{e_1}y_2^{e_2}\dots y_t^{e_t}:0\leq j\leq l, 0\leq e_i\leq p-2\}.\]

Let $P_i=(\alpha,\beta_1,\beta_2,\dots, \beta_t)\in B$.  Then, returning to the notation of Section \ref{recovery_issues}, we have $B_{ij}$ is the set of positions corresponding to the points in $\{(\alpha,y_1,y_2,\dots, y_t)\in B:~ y_k=\beta_k~ \forall ~k\neq j\}.$  We then have $|B_{ij}|=p$. On points corresponding to the positions in $B_{ij}$, any function in $V$ varies as a polynomial in $y_j$ of degree at most $(p-2)$ and can therefore be interpolated by knowing its values on any $p-1$ points.

Choosing $l$ for maximum dimension with guaranteed positive minimum distance, we get the following theorem.

\begin{theorem} \label{AS Code} The code $C\left((q-tp^h+2tp^{h-1}-t)\infty_x,B\right)$ constructed from the fiber product $\mathcal{X}$ of the Artin-Schreier curves above is a locally recoverable $[n,k,d]$ code over $\F_{q}$ with availability $t$ and locality $\left(p-1,p-1,\dots,p-1 \right)$ where
\begin{align*}
n & = p^tq, \\
k &= (q-tp^h+2tp^{h-1}-t+1)(p-1)^t, and \\
d & \geq 2tp^{t-1}. \\
\end{align*} 
\end{theorem}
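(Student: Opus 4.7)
The plan is to apply Theorem~\ref{construction} directly to the fiber product $\mathcal{X} = \mathcal{Y}_{a_1}\times_{\mathbb{P}^1_x}\cdots\times_{\mathbb{P}^1_x}\mathcal{Y}_{a_t}$, using the parameters spelled out in the paragraphs preceding the theorem. First I would collect the numerical data needed to feed into the construction: $\mathcal{Y} = \mathbb{P}^1_x$ has genus $0$, each map $h_i:\mathcal{Y}_{a_i}\to \mathbb{P}^1_x$ has degree $d_{h_i}=p$ fully ramified above $\infty_x$, each projection $g_i:\mathcal{X}\to \mathcal{Y}_{a_i}$ has degree $d_{g_i}=p^{t-1}$, the composite $g:\mathcal{X}\to \mathbb{P}^1_x$ has degree $d_g=p^t$, and each primitive element $y_i$ has degree $d_{y_i}=p^h+1$ viewed as a map $\mathcal{Y}_{a_i}\to \mathbb{P}^1$ (as noted in the text above the theorem).

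Next I would choose $S$ and $D$. Since the only ramification of $g$ lies above $\infty_x$, I take $S = \mathbb{P}^1_x(\F_q)\setminus\{\infty_x\}$, so $|S|=q$, and set $D = l\infty_x$ with $l = q - tp^h + 2tp^{h-1} - t$. This choice satisfies the hypotheses of Theorem~\ref{construction}: $S\cap \mathrm{supp}(D)=\emptyset$, $S$ is disjoint from the image of every pole of $y_i$, and one checks $l<|S|=q$ for the relevant ranges of parameters. The length is then $n = d_g|S| = p^t q$ by definition, and since $g_\mathcal{Y}=0$ Riemann--Roch gives $\ell(D) = l+1 = q - tp^h + 2tp^{h-1} - t + 1$, whence the dimension formula $k = \ell(D)(p-1)^t$ follows from the theorem.

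The remaining task is the distance bound. Plugging the values above into
\[
d \;\geq\; n - l\,d_g - \sum_{i=1}^t (d_{h_i}-2)\,d_{g_i}d_{x_i}
\]
yields
\[
d \;\geq\; p^tq - l\,p^t - t(p-2)\,p^{t-1}(p^h+1).
\]
I would substitute $l = q - tp^h + 2tp^{h-1} - t$ and simplify, using the identity
\[
(p-2)(p^h+1) + 2 \;=\; p\bigl(p^h - 2p^{h-1} + 1\bigr),
\]
which makes the right-hand side collapse to $2tp^{t-1}$. This gives exactly $d \geq 2tp^{t-1}$ as claimed, and confirms that the chosen $l$ is the largest value keeping the bound positive. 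The locality statement $(p-1,\dots,p-1)$ is immediate from $d_{h_i}-1 = p-1$ for every $i$.

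The only place that requires genuine care is the algebraic simplification of the distance bound; all other steps are a direct translation of the fiber product data into the statement of Theorem~\ref{construction}. I do not anticipate any conceptual obstacle beyond verifying that the chosen $l$ is the extremal integer for which $d \geq 2tp^{t-1}$ holds, which is the content of the identity displayed above.
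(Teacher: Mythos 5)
Your proposal is correct and follows essentially the same route as the paper's proof: a direct application of Theorem \ref{construction} with $S = g(B)$ of size $q$, $D = l\infty_x$ for $l = q - tp^h + 2tp^{h-1} - t$, and the same substitution into the distance bound, which indeed collapses to $2tp^{t-1}$. The algebraic identity you display is the correct verification of that final simplification.
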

\begin{proof}
The construction in Theorem \ref{construction} gives rise to a code of length $n=p^tq$, where $B=\mathcal{X}(\mathbb{F}_q)\setminus \{\infty_{\mathcal{X}}\}$.  The set $S=g(B)$ has size $s=q$.  To achieve positive minimum distance, we then take $l=q-tp^h+2tp^{h-1}-t$, where $g_{\mathcal{X}}=0$ so $m=q-tp^h+2tp^{h-1}-t+1$, giving a code of dimension $k= (q-tp^h+2tp^{h-1}-t+1)(p-1)^t$.  Each coordinate has $t$ disjoint recovery sets of size $(p-1)$, given by the fibers $\tilde{g}_j^{-1}(P)$ for $P\in\tilde{\mathcal{Y}}_{a_j}$ where $1\leq j\leq t$.  The minimum distance is bounded by $d\geq n-p^t(q-tp^h+2tp^{h-1}-t)-t(p-2)p^{t-1}(p^h+1)=2tp^{t-1}$.
\end{proof}

\begin{remark}
Though it does not seem to have been remarked before, it is straightforward to see that when $t=h$, then the curve $\mathcal{X}$ has genus $\frac{(\sqrt{q}-1)\sqrt{q}}{2}$ and is maximal over $\mathbb{F}_{q}$.  Since the Hermitian curve $\mathcal{H}_{\sqrt{q}}$ is the only curve of this genus maximal over $\mathbb{F}_q$~\cite{RuckStichtenoth}, we have that $\mathcal{X}\cong\mathcal{H}_{\sqrt{q}}$.  Therefore the construction in Theorem \ref{AS Code} is yet another example of a code on the Hermitian curve with interesting properties.

\begin{example}
\label{example:fiber3} 

We construct a locally recoverable code with three recovery sets of size two for each position, letting $p=h=t=3$ in the above construction. Consider the field 
\[ \mathbb{F}_{3^6}\cong \mathbb{F}_3[x]/\langle x^6 + 2x^4 + x^2 + 2x+2\rangle. \]

As the construction indicates, we need roots of the polynomial $x^{27}+x$ that generate a $3$-dimensional vector space over $\mathbb{F}_{3}$. We choose the following roots: 
\begin{align*} 
2a^3 + a+1 & = a^{350}:=a_1 \\
a^4 + a^2 +1 & = a^{210}:=a_2 \\
2a^5 + a^3 + a^2 +1 &=a^{490}:=a_3. 
\end{align*} 
Now the components of the fiber product are the curves $\mathcal{Y}_{a_1}, \mathcal{Y}_{a_2}, \mathcal{Y}_{a_3}$, given below: 
\begin{align*} 
y^3+y &= a^{350}x^{28} \\
y^3+y &= a^{210}x^{28} \\
y^3+y &= a^{490}x^{28}. \\
\end{align*} 
The code has length $n=19683$, dimension $k=5600$, and minimum distance $d\geq 54$. 

\end{example} 
\end{remark}

\section{Conclusion} \label{Conclusion}

In this paper, we detailed a construction of locally recoverable codes from fiber products of algebraic curves, building on the work of Barg et. al. \cite{Barg16}. This construction results in different bounds (from those in \cite{Barg16}) for minimum distance in the case $t=2$ and allows for $t>2$ recovery sets.  This gives rise to several new families of locally recoverable codes, specifically those from generalized Giulietti and Korchm\'{a}ros (GK) curves, Suzuki curves, and the maximal curves of van der Geer and van der Vlugt, including an LRC($t$) from the Hermitian curve $\mathcal{H}_{p^t}$. The code construction from generalized GK curves is explicit, but the recovery sets have vastly different cardinalities (an advantage or disadvantage, depending on the perspective taken). In contrast, the codes from the Suzuki curves provide balanced recovery sets, but the codes themselves are not easily explicitly constructed. The construction using the curves of van der Geer and van der Vlugt provides both explicit code construction and balanced recovery sets, of which arbitrarily many are available.

{\it Acknowledgements} We would like to thank Bjorn Poonen and Rachel Pries for helpful comments and Paul Myrow for graphical assistance. In addition, we would like to acknowledge the hospitality of IPAM and the organizers of the 2016 Algebraic Geometry for Coding Theory and Cryptography Workshop: Everett Howe, Kristin Lauter, and Judy Walker.


\begin{thebibliography}{99}

\bibitem{Abdon}
     \newblock  M. Abd{\'o}n, J. Bezerra, and Luciane Quoos, 
    \newblock Further examples of maximal curves,
   \newblock \emph{Journal of Pure and Applied Algebra}, \textbf{213} (2009), 1192--1196.

\bibitem{Ballico}
\newblock  E. Ballico and A. Ravagnani,
\newblock Embedding Suzuki curves in {$\Bbb P^4$}, 
\newblock \emph{Journal of Commutative Algebra}, \textbf{7} (2015), 149--166. 


\bibitem{IPAM}
\newblock A. Barg, K. Haymaker, E. W. Howe, G. L. Matthews, and A. V\'{a}rilly-Alvarado, 
\newblock Locally recoverable codes from algebraic curves and surfaces, 
\newblock 	arXiv:1701.05212.

\bibitem{Barg16}
\newblock  A. Barg, I. Tamo, and S. Vl\u{a}du\c{t}, 
\newblock Locally recoverable codes on algebraic curves, 
\newblock \emph{Proceedings of the IEEE Int. Symp. Info. Theory}, (2015), 1252--1256.  
\newblock Extended version: 	arXiv:1603.08876.

\bibitem{Barg16extended}
\newblock  A. Barg, I. Tamo, and S. Vl\u{a}du\c{t}, 
\newblock Locally recoverable codes on algebraic curves,  
\newblock arXiv:1603.08876.


\bibitem{Duursma} 
\newblock  A. Eid and I. Duursma, 
\newblock Smooth embeddings for the Suzuki and Ree curves, 
\newblock \emph{Proceedings of the conference on Arithmetic, Geometry and Coding Theory (AGCT 2013)}, Contemporary Mathematics Series (AMS),  \textbf{637}, 251--291. 

\bibitem{Garcia} 
\newblock  A. Garcia, C. G\"uneri, and H. Stichtenoth, 
\newblock A generalization of the Giulietti-Korchm\'aros maximal curve, 
\newblock \emph{Advances in Geometry}, \textbf{10} (2010), 427--434. 

\bibitem{GK}
\newblock M. Giulietti and G. Korchm\'aros,
\newblock A new family of maximal curves over a finite field, 
\newblock \emph{Math. Ann.} \textbf{343} (2009), no. 1, 229--245.

\bibitem{GKT}
\newblock  M. Giulietti, G. Korchm\'aros, and F. Torres, 
\newblock Quotient curves of the Suzuki curve, 
\newblock \emph{Acta Arithmetica}, \textbf{122} (2006), 245--274. 

\bibitem{GMP}
\newblock R. Guralnick, B. Malmskog, and R. Pries,
\newblock  The automorphism groups of a family of maximal curves,
\newblock in \emph{Journal of Algebra}, \textbf{361} (2012), 92--106.

\bibitem{Hansen}
\newblock J. Hansen, 
\newblock  Deligne-Lusztig varieties and group codes, 
\newblock in \emph{Coding Theory and Algebraic Geometry}, \textbf{1518} (2006), Lecture Notes in Mathematics, 63--81.

\bibitem{Liu}
\newblock Q. Liu,
\newblock \emph{Algebraic Geometry and Arithmetic Curves},
\newblock Oxford Graduate Texts in Mathematics 6 (2002).


\bibitem{RuckStichtenoth}
\newblock  H.-G. R\"uck and H. Stichtenoth, 
\newblock A characterization of Hermitian function fields over finite fields,
\newblock \emph{Journal fur die reine und angewandte Mathematik}, \textbf{457} (1994), 185--188. 

\bibitem{Stichtenoth}
\newblock H. Stichtenoth,
\newblock \emph{Algebraic Function Fields and Codes},
\newblock Springer (2008).

\bibitem{vanderGeer1996}
\newblock G. van der Geer and M. van der Vlugt
\newblock  How to construct curves over finite fields with many points, 
\newblock in \emph{Arithmetic geometry (Cortona, 1994)}, Symposia Mathematica XXXVII (1997), Cambridge: Cambridge University Press, 169--189. 

\end{thebibliography}

\medskip
Received xxxx 20xx; revised xxxx 20xx.
\medskip

\end{document}